\newcommand{\PP}{{\mathbb P}}
\newcommand{\rr}{{\mathbb R}}
\newcommand{\QQ}{{\mathbb Q}}
\newcommand{\LL}{{\mathcal L}}
\newcommand{\Sing}{{\rm{Sing}}}
\newcommand{\ls}{{\mathcal{L}}}
\DeclareMathOperator{\edim}{edim}
\DeclareMathOperator{\ldim}{ldim}
\DeclareMathOperator{\vdim}{vdim}
\DeclareMathOperator{\mult}{mult}
\DeclareMathOperator{\Pic}{Pic}
\DeclareMathOperator{\Cr}{Cr}
\DeclareMathOperator{\Eff}{Eff}
\DeclareMathOperator{\Mov}{Mov}
\DeclareMathOperator{\NS}{N}
\DeclareMathOperator{\J}{J}
\DeclareMathOperator{\Cox}{Cox}
\DeclareMathOperator{\sldim}{\sigma ldim}
\newcommand{\paper}{: \begin{it}}
\newcommand{\jour }{, \end{it}}
\newtheorem{theorem}{Theorem}[section]
\newtheorem{lemma}[theorem]{Lemma}
\newtheorem{proposition}[theorem]{Proposition}
\newtheorem{corollary}[theorem]{Corollary}
\newtheorem{conjecture}[theorem]{Conjecture}
\theoremstyle{definition}
\newtheorem{definition}[theorem]{Definition}
\newtheorem{example}[theorem]{Example}
\theoremstyle{remark}
\newtheorem{remark}[theorem]{Remark}
\numberwithin{equation}{section}
\title{On the effective cone of $\PP^n$ blown-up at $n+3$ points}
\author{Maria Chiara Brambilla}
\email{{\tt brambilla@dipmat.univpm.it}}
\address{Universit\`a Politecnica delle Marche, 
via Brecce Bianche, I-60131 Ancona, Italy}
\author{Olivia Dumitrescu}
\email{{\tt  dumitrescu@math.uni-hannover.de}}
\address{Institut f\"ur Algebraische Geometrie GRK 1463, Welfengarten 1, 30167 Hannover, Germany}
\author{Elisa Postinghel}
\email{{\tt elisa.postinghel@wis.kuleuven.be}}
\address{KU Leuven, Department of Mathematics, Celestijnenlaan 200B, 3001 Heverlee,
Belgium}
\thanks{The first author is partially supported by MIUR and INDAM.
The second author is a member of 
the Simion Stoilow Institute of Mathematics of the 
Romanian Academy.
The third author is supported by the Research Foundation - Flanders (FWO)}
\subjclass[2010]{Primary: 14C20. Secondary: 14J70, 14J26, 13D40}
\begin{document}

\begin{abstract}
 We compute the facets of the effective and movable
cones of divisors on the blow-up of $\PP^n$ at $n+3$ points in general position.
Given any linear system of hypersurfaces of $\PP^n$ based at $n+3$ multiple points in general position,  
we prove that the secant varieties to the rational normal 
curve of degree $n$ passing through the points, as well as their joins with linear subspaces spanned by some of the points,
 are cycles of the base locus and we compute their multiplicity. 
{We conjecture that a linear system with $n+3$ points is linearly special only if it contains such subvarieties in the base locus
and we give a new formula for the expected dimension.}
\end{abstract}

\maketitle

\section{Introduction}

%%%We recall the general setting and notation of classical interpolation problems in $\PP^n$. 
Let $\LL=\LL_{n,d}(m_1,\ldots,m_s)$ denote the linear system of 
hypersurfaces of degree $d$ in $\PP^n$ passing through a collection of $s$ points in general position with multiplicities
at least
$m_1,\ldots,m_s$. A classical question is to compute the dimension of $\ls$. 
A parameter count provides a lower bound: the {\em (affine) virtual dimension} of $\LL$ is denoted by
\begin{equation}\label{virtual dim}
\vdim(\LL)=\binom{n+d}{n}-\sum_{i=1}^s\binom{n+m_i-1}{n}
\end{equation}
and the {\em (affine) expected dimension} of $\LL$ is $\edim(\LL)=\max(\vdim(\LL),0)$. 
If the dimension of $\LL$ is strictly  greater that the expected dimension 
we say that $\LL$ is \emph{special}.
%When the multiplicities are high with respect to the dimension, $\ls$ is in general special. 

{
The \emph{dimensionality problem}, that is the classification of all special linear systems, is still open in general, in spite of intensive investigation by many authors. 
In particular, the study of linear systems requires information on the \emph{effective cone} $\Eff_{\rr}(X)$ of divisors
on the blow-up $X$ of $\PP^n$ at the given points. 
Also the computation of the effective cone is in general a difficult task.}

{Let us overview now some known results.}
In the planar case, the Segre-Harbourne-Gimigliano-Hirschowitz Conjecture
describes all effective special linear systems. 
{It predicts that a linear system in $\PP^2$ is special only if it contains in its base locus $(-1)$-curves.}
 In particular, it conjectures the effective cone of divisors on $\PP^2$ blown-up at points (see \cite{Ciliberto, Gimigliano, Harbourne3, Hir2}). 
On the negative side, we mention Nagata's Conjecture that predicts the nef 
cone of linear systems in the blown-up plane at general points. 
Even for the case of dimension two in spite of many partial results, both conjectures are open in general (see \cite{CHMR}).

In the case of $\PP^3$, Laface and Ugaglia Conjecture states that a Cremona reduced special linear system in $\PP^3$
contains in its base locus either lines 
or the unique quadric surface determined by nine general points (see e.g.\ \cite{laface-ugaglia-TAMS, laface-ugaglia-elem}).
The base locus lemma for the quadric in $\PP^3$ is difficult; it is related to Nagata's Conjecture for ten points in 
$\PP^2$ (see \cite{BraDumPos2}). The degeneration technique introduced by Ciliberto and Miranda
 (see e.g. \cite{BraDumPos2, CM1}) is a successful method in the study of interpolation problems in higher dimensions.

In the case of $\PP^n$ general results are rare and few things are known. 
The well-known Alexander-Hirschowitz Theorem \cite{AlHi} classifies completely the case of double points 
(see \cite{ale-hirsch, Ch1, Ch2, Po} for more recent and simplified proofs). 
In general, besides the computation of some sporadic
 examples
and the formulation of conjectures about the speciality (see e.g.\ \cite{bocci1,bocci2}),
very little {is known.}

{
The important feature of special linear systems interpolating double points in $\PP^n$ is that every element is singular along a positive dimensional subvariety containing the points. 
As conjectured by Ciliberto and Miranda (see \cite[Conjecture 6.4]{Ciliberto}), it is expected that this is the case also for higher multiplicities.
So it is natural to look at the {\it base locus} of special linear systems and try to understand the possible connection with the speciality. We  call {\em obstructions} the subvarieties that whenever contained with multiplicity in the {base locus} of a linear system force it to be special.}

{
In \cite{BraDumPos1, DumPos} the authors started a systematic study of special linear systems from this point of view and considered in particular the case of linear obstructions.
Taking into account the contribution of all the linear cycles of the base locus, a new notion of expected dimension can be given, the {\it linear expected dimension} $\ldim(\LL)$ (see Definition \ref{new-definition}).
We say that a system $\LL$ is \emph{linearly special} (resp.\ \emph{linearly non-special}) if its dimension differs from (resp.\ equals) the linear expected dimension. 
In other words a linear system is linearly special if its speciality cannot be explained completely by linear obstructions.  
Any linear system with $s\le n+2$ base points is linearly non-special.
This was proved in \cite{BraDumPos1, DumPos} by means of a complete cohomological classification of strict transforms in subsequently blown-up spaces of  linear systems, see Section \ref{resultsBraDumPos} for an account. 
}

The first instance of a non-linear cycle of the base locus of a special linear system is the rational normal curve of degree $n$ through $n+3$ general points of $\PP^n$. 
The well-known Veronese Theorem (often referred to as the Castelnuovo Theorem)
tells us that there exists exactly one such a curve.
In  $\mathbb{P}^2$ an instance of this is the unique conic through five points. 
%In $\mathbb{P}^3$, all curves that are obstruction to the speciality 
%are conjecturally the
%\emph{elementary} $(-1)$-\emph{curves}, described
 %in \cite{laface-ugaglia-TAMS, laface-ugaglia-elem}.  
%In $\mathbb{P}^n$ little is known about classes of effective
% curves passing through a collection of fixed points. 
%For $s=n+3$ 
% the only elementary $(-1)$-curves are lines through pairs of points or
%the rational normal curve of degree $n$ through $n+3$ points.
In this article we focus on special linear system obstructed by rational normal curves and related varieties.
We prove first a \emph{base locus lemma} (Lemma \ref{base locus lemma rnc})
for linear systems with arbitrary number of general points
{ which describes the following 
non-linear cycles of the base locus:
the rational normal curve, its secant varieties and the cones over them.} 
For instance, the fixed cubic {hypersurface} of $\PP^4$ interpolating $7$ double points, that appears as one of the exceptions in the 
Alexander-Hirschowitz theorem, is the variety of secant lines to the rational normal curves given by the seven points.

{We expect that }when the multiplicity of containment in the base locus is high enough with respect to the degree, 
those cycles forces the linear system to be special.
{More precisely, we give} a conjectural formula (Definition
 \ref{new definition rnc}, Conjecture \ref{conjecture}) 
for the dimension of linear systems based at $n+3$ points. 
The formula in Definition \ref{new definition rnc} takes into account 
 the contribution of the linear cycles and also that of the normal curves and related cycles {in the base locus}.
In Section \ref{evidences} we prove that this conjecture holds for $n=2,3$ and for general $n$ in a number of interesting families 
of homogeneous linear systems.

{As a consequence of our analysis,} we deduce {the main result of this paper, that is} an explicit description of all effective 
divisors in $X$,  the  blown-up $\PP^n$ at $n+3$ points. 
{We give in Theorem \ref{effectivity lemma Pn n+3} a list of inequalities that define the effective cone of $X$,  
and we also describe the {movable cone} of $X$, 
Theorem \ref{thm movable cone}.}

We mention that a new approach to the dimensionality problem for $s=n+3$ points
was introduced in  \cite{sturmfels} and their analysis relies on sagbi bases.
For $s\le n+3$, in  \cite{CT, Mukai2} it was proved that the blow-up 
of $\PP^n$ at $s$ points in general position 
 is a \emph{Mori dream space}. In particular Castravet and Tevelev \cite{CT}
gave the rays of the effective cone, see Section \ref{movable cone} for more details. 
What is interesting is the fact that Castravet and Tevelev's extremal rays can be formulated 
in terms of hypersurfaces that are either secant varieties to the rational normal curve through the $n+3$ points or 
 their joins with linear subspaces spanned by the points,
see Section \ref{secants}.
In this paper we show that in fact both the effective cone 
(Theorem \ref{effectivity lemma Pn n+3})
and movable cone (Theorem \ref{thm movable cone}) of the blown-up space, and the dimensionality 
problem, depend exclusively on these secant varieties seen as cycles of arbitrary codimension in $\PP^n$.
{
Moreover there is a bijection between the $2^{n+2}$  weights of the
half-spin representations of $\mathfrak{so}_{2(n+3)}$ and the generators of the Cox
ring of the blow-up of $\mathbb{P}^n$ at $n+3$ points in general position
(see \cite{Dolgachev, CT, sturmfels-velasco}). In \cite{sturmfels-velasco} in particular this bijection interprets the latter
space as a spinor variety. It would be interesting to establish a
dictionary that translates the language of secant varieties into that of
spinor varieties.
}

The article is organized as follows.
In Section \ref{resultsBraDumPos} we give an account on the notion of linear speciality
 \cite{BraDumPos1,DumPos}.

In Section \ref{secants} we  give a geometric description of the rational normal curves and (cones over) their secants and we give an 
interpretation in terms of divisors of those among them that are of codimension $1$, by means of Cremona transformations
of $\PP^n$. In particular, these divisors are the Castravet-Tevelev rays generating the effective cone.

In Section \ref{base locus lemma for secants} we prove the base locus lemma for rational normal curves and related cycles, Lemma \ref{base locus lemma rnc}.

In Section \ref{effectivity for secants} we describe the effective and movable cones of $X$, Theorem \ref{effectivity lemma Pn n+3}
and Theorem \ref{movable cone}.

In Section \ref{new exp dim with secants} we introduce the new notion of expected dimension, $\sldim$ (Definition \ref{new definition rnc}),
and  
state our Conjecture \ref{conjecture}, %that all linear systems in $\PP^n$ with $n+3$ have this dimension, 
exhibiting a list of evidences 
in Section \ref{evidences}.

\subsection*{Acknowledgements} 
The authors would like to thank the Research Center FBK-CIRM  Trento
for the hospitality and financial support during the stay for the Summer School ``An interdisciplinary approach to tensor decomposition'' (Summer 2014)
and during their one month ``Research in Pairs'' program  (Winter 2015).
We are grateful to Ana-Maria Castravet for pointing out general aspects of this work and further research directions and to 
 Cinzia Casagrande for pointing out some inaccuracies in a previous version of this paper.
{We thank the referee for his/her helpful comments.}

\section{
{Linear speciality of linear systems}}
\label{resultsBraDumPos}

{Given a linear system $\LL$, we say that {\it its base locus  contains a subvariety $L$ with multiplicity $k$} if any  hypersurface in $\LL$ has multiplicity at least $k$ along each point of $L$.}

Let $\ls=\ls_{n,d}(m_1,\dots,m_s)$
be a non-empty linear system,  let 
$I(r)\subseteq\{1,\dots,s\}$ be any  multi-index
   of length $|I(r)|=r+1$, for $0\le r\le \min(n,s)-1$ and denote
by $L_{I(r)}$ the unique $r$-linear cycle through the points $p_i$, for $i \in I(r)$.
Set 
\begin{equation}\label{mult k} 
 k_{I(r)}=\max \left(\sum_{i\in I(r)}m_i -rd,0\right).
\end{equation}

It is an easy consequence of B\'ezout's Theorem that if $k_{I(r)}>0$ 
then all elements of $\ls$ vanish along $L_{I(r)}$. 

In \cite{BraDumPos1} a (sharp)  base locus lemma for linear cycles, that we will refer to as \emph{linear base locus lemma},
 for linear systems with at most $n+2$ points was proved
 and later generalized in \cite{DumPos} to linear systems with arbitrary numbers of points.
We summarize the content of the two above mentioned results in the following

\begin{lemma}[{\cite[Proposition 4.2]{DumPos}}]\label{linear base locus}
For any non-empty linear system $\ls$ with arbitrary number of points and for any $0\leq r\leq n-1$,
 the multiplicity of containment of the cycle $L_{I(r)}$ in the base locus of $\ls$ is $k_{I(r)}$.
\end{lemma}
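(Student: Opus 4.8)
The plan is to prove the statement in two halves: the lower bound on the multiplicity of containment, which is elementary, and the upper bound, which is the substantive part and requires a dimension-count carried out after blowing up. For the lower bound, fix a multi-index $I(r)$ with $k_{I(r)}>0$. Then for any hypersurface $D\in\ls$ and a general line $\ell\subseteq L_{I(r)}$ meeting the $r+1$ points $p_i$, $i\in I(r)$, B\'ezout's theorem forces $D\cap\ell$ to have degree $d$ along a line on which $D$ already vanishes to order at least $\sum_{i\in I(r)}m_i$ at the marked points; hence $\ell\subseteq D$, and varying $\ell$ shows $L_{I(r)}\subseteq\Bs(\ls)$. Iterating the B\'ezout argument on the successive restrictions to $L_{I(1)}\subseteq L_{I(2)}\subseteq\cdots\subseteq L_{I(r)}$ (equivalently, differentiating in the normal directions) upgrades this to: every $D\in\ls$ vanishes along $L_{I(r)}$ to order at least $k_{I(r)}$. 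So the multiplicity of containment is $\ge k_{I(r)}$.

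For the upper bound one must show that a general member of $\ls$ does \emph{not} vanish along $L_{I(r)}$ to order $k_{I(r)}+1$. The natural approach is to pass to the iterated blow-up $\tilde X\to\PP^n$ first at the points $p_1,\dots,p_s$ with exceptional divisors $E_1,\dots,E_s$, and then along the (strict transforms of the) linear cycles $L_{J}$ in order of increasing dimension, with exceptional divisors $E_J$; the strict transform $\tilde{\ls}$ of $\ls$ then has class $dH-\sum m_iE_i-\sum_{J}k_{J}E_J$ in $\Pic(\tilde X)$. One then needs two things: (i) a description of which cycles $L_J$ are actually base cycles, so that the blow-up is along a smooth center at each stage (this is where generality of the points enters — the only linear cycles forced into the base locus are the $L_{I(r)}$ with $k_{I(r)}>0$, and these meet each other in the expected way); and (ii) a cohomological vanishing, namely $h^1(\tilde X,\tilde{\ls})=0$ (or at least that the natural restriction maps peeling off one order of vanishing along each $E_J$ are surjective). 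The multiplicity of containment equalling exactly $k_{I(r)}$ is then equivalent to the statement that $\tilde{\ls}$ has no further base component supported on $E_{I(r)}$, which follows once (ii) is known by a standard argument comparing the linear system before and after subtracting $E_{I(r)}$.

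The cohomological input (ii) is the main obstacle, and I expect the proof to establish it by induction, simultaneously on $n$, on $s$, and on the number of base cycles, using the exact sequences
\[
0\longrightarrow \OO_{\tilde X}(\tilde{\ls}-E_{I(r)})\longrightarrow \OO_{\tilde X}(\tilde{\ls})\longrightarrow \OO_{E_{I(r)}}(\tilde{\ls}|_{E_{I(r)}})\longrightarrow 0
\]
to reduce the vanishing on $\tilde X$ to a vanishing on each exceptional divisor $E_{I(r)}$ — which is a projective bundle over $L_{I(r)}\cong\PP^r$, and whose relevant cohomology is in turn governed by a linear system on $\PP^{n-r-1}$ (the projection from $L_{I(r)}$) based at the images of the remaining points, i.e.\ a linear system of the same type in lower dimension with fewer points. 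One must check the base of the induction ($r=n-1$, where $L_{I(n-1)}$ is a hyperplane and the statement is essentially that a general degree-$d$ form with prescribed vanishing along a hyperplane behaves as expected) and verify that at each inductive step the hypotheses of the linear base locus lemma in the smaller case are met. The generality of the points is used throughout to guarantee that the images of the points under the successive projections remain in general position and that the base cycles in each auxiliary system are again linear cycles of the predicted multiplicities; keeping the bookkeeping of the multi-indices consistent under these projections is the delicate part.
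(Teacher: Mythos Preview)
The paper does not contain a proof of this lemma: it is stated with a citation to \cite[Proposition 4.2]{DumPos} and no argument is given in the present text. So there is no in-paper proof to compare your proposal against.

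That said, your outline is broadly in the spirit of the cited reference. The lower bound via B\'ezout is standard and correct. For the upper bound, the approach in \cite{DumPos} (and already for $s\le n+2$ in \cite{BraDumPos1}) is indeed a cohomological one on an iterated blow-up along the linear base cycles, establishing vanishing theorems that in particular imply the strict transform has no residual base component along the relevant exceptional divisors. Your sketch captures this architecture. One caution: your claim that $h^1(\tilde X,\tilde{\ls})=0$ is stronger than what is needed and is not true in general for arbitrary $s$ (the linear system can be special); what the cited proof actually shows is the more targeted statement that the restriction maps peeling off vanishing along each $E_{I(r)}$ are controlled well enough to pin down the multiplicity, not full $h^1$-vanishing. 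If you want to reconstruct the argument rather than cite it, you should aim for that weaker, sharper statement.
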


When the order of vanishing is high, precisely when $k_{I(r)}>r$, then $L_{I(r)}$ provides obstruction to the non-speciality.
This observation yields the following definition of expected dimension.

\begin{definition}[{\cite[Definition 3.2]{BraDumPos1}}]\label{new-definition}
The {\em (affine) linear virtual dimension} of $\LL$ is the number
\begin{equation}\label{linvirtdim}
\sum_{r=-1}^{s-1}\sum_{I(r)\subseteq \{1,\ldots,s\}} (-1)^{r+1}\binom{n+k_{I(r)}-r-1}{n}.
\end{equation}
 where we set $I(-1)=\emptyset$ and $k_{I(-1)}=d.$

The {\em (affine) linear expected dimension} of $\LL$, {denoted by $\ldim(\LL)$}, is $0$ if $\LL$ is contained in a linear system whose 
linear virtual dimension is negative, otherwise is the maximum between the linear virtual dimension of $\LL$ and $0$.
\end{definition}

In (\ref{linvirtdim}), the number $(-1)^{r+1}{{n+k_{I(r)}-r-1}\choose{n}}$ computes the 
contribution of the linear 
cycle $L_{I(r)}\cong\mathbb{P}^r$ spanned by the points $p_{i_j}$, $i_j\in I(r)$. 
If all the numbers $k_{I(r)}$ are zero, the linear virtual dimension (\ref{linvirtdim}) equals the virtual dimension \eqref{virtual dim} of $\ls$.
Asking whether the dimension of a
 given linear system equals its linear expected dimension is a refinement of the classical question of asking whether the dimension 
equals the expected dimension. 
{We say that a linear system $\LL$ is {\it linearly special} if $\dim(\LL)\neq\ldim(\LL)$. On the other hand}
a linear system is called {\em linearly non-special} (or {\em only linearly obstructed})
if its dimension equals the linear expected dimension.

We recall here, for the reader convenience, the following results on linearly speciality and effectiveness.

\begin{theorem}[{\cite[Corollary 4.8, Theorem 5.3]{BraDumPos1}}]\label{theorem n+3}
All non-empty linear systems with $s\le n+2$ points  are linearly non-special. 

Moreover, for $s\ge n+3$ let $s(d)\geq0$ is the number of points of multiplicity $d$. If
\begin{equation}\label{EffectivityCondition}
\sum_{i=1}^s m_i\leq nd+  \min (n-s(d),s-n-2), \quad 1\le m_i\le d,
\end{equation}
then $\LL$ is linearly non-special.
\end{theorem}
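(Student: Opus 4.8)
The plan is to realise the linear virtual dimension \eqref{linvirtdim} as an Euler characteristic on an iterated blow-up and then reduce the statement to a higher cohomology vanishing, which one proves by an induction that alternates standard Cremona transformations with restriction to exceptional divisors. First I would blow up $\PP^n$ at $p_1,\dots,p_s$ and then successively blow up the strict transforms of the linear cycles $L_{I(r)}$ in order of increasing $r$, for all multi-indices with $k_{I(r)}>0$; call the resulting smooth variety $\widetilde X$, with exceptional divisors $E_i$ over the points and $E_{I(r)}$ over the linear cycles, and let $\widetilde\LL$ be the strict transform of $\LL$ on $\widetilde X$, i.e.\ the class $dH-\sum_i m_iE_i-\sum_{r,\,I(r)}k_{I(r)}E_{I(r)}$ after accounting for the multiplicity acquired along each center. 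By Lemma \ref{linear base locus} the numbers $k_{I(r)}$ are exactly the multiplicities of containment of the $L_{I(r)}$ in the base locus of $\LL$, so $h^0(\widetilde X,\widetilde\LL)$ equals the dimension of $\LL$, and unwinding the blow-up formula for $\chi$ one center at a time shows that $\chi(\widetilde X,\widetilde\LL)$ equals \eqref{linvirtdim}: each summand $(-1)^{r+1}\binom{n+k_{I(r)}-r-1}{n}$ is the change in Euler characteristic produced by blowing up $L_{I(r)}$ with multiplicity $k_{I(r)}$ (this is precisely the motivation for Definition \ref{new-definition}). Consequently, ``$\LL$ is linearly non-special'' follows from the two facts: $H^i(\widetilde X,\widetilde\LL)=0$ for all $i\ge1$, and, if \eqref{linvirtdim} is negative, then $\LL$ is empty.

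The core is therefore a cohomology vanishing, which I would prove by a simultaneous induction on $n$, on $d$ and on $\sum_i m_i$ --- run on a uniform statement that also covers the possibly empty systems arising on exceptional divisors --- alternating two moves. (i) \emph{Cremona reduction}: reorder so that $m_1\ge\cdots\ge m_s$ and apply the standard Cremona transformation of $\PP^n$ based at $p_1,\dots,p_{n+1}$, which replaces $(d;m_1,\dots,m_s)$ by $(d';m_1',\dots,m_s')$ with $d'=nd-\sum_{i=1}^{n+1}m_i$, $m_i'=(n-1)d-\sum_{j\le n+1,\,j\ne i}m_j$ for $i\le n+1$, and $m_i'=m_i$ otherwise; one checks that the family of cycles $L_{I(r)}$ together with the numbers $k_{I(r)}$ is permuted by this transformation, so that --- after possibly blowing up further coordinate cycles --- the model $\widetilde X$, the Euler characteristic and the validity of the vanishing are Cremona invariant, and one may assume $d$ cannot be lowered, which bounds $\sum_{i=1}^{n+1}m_i$ in terms of $d$. (ii) \emph{Restriction}: peel off one exceptional divisor $E$ at a time through $0\to\widetilde\LL(-E)\to\widetilde\LL\to\widetilde\LL|_E\to0$; when $E=E_i$ sits over a point it is an iterated blow-up of $\PP^{n-1}$ and $\widetilde\LL|_E$ is the strict transform of a linear system on $\PP^{n-1}$ with fewer points, handled by the inductive hypothesis in $n$, and when $E=E_{I(r)}$ sits over a higher-dimensional cycle it is a projective bundle over $L_{I(r)}\cong\PP^r$ and the restriction is controlled by pushing forward along this bundle. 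Combining (i) and (ii), the induction terminates at a short list of base systems --- small degree, or few points in standard form --- for which the vanishing, and the emptiness in the negative range, is checked directly.

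It remains to see where the hypotheses enter. For $s\le n+2$ no rational normal curve, no secant variety of one, and no join of such with a span of the points can lie in the base locus of any $\LL$ on $\PP^n$, so in move (ii) the only base locus encountered is the linear one already removed, and the induction closes unconditionally: this proves the first assertion. For $s\ge n+3$ the Veronese theorem provides a unique rational normal curve $C$ through $n+3$ of the points, and a degree-$d$ hypersurface meets $C$ in $nd$ points; hence as soon as $\sum_i m_i>nd$ --- together with the analogous B\'ezout thresholds for the secant varieties of $C$ and their joins with linear spans of the points, which is where the quantity $s(d)$ enters --- one of these non-linear cycles is forced into the base locus and contributes sections not counted by \eqref{linvirtdim}, breaking the inductive vanishing at a restriction step. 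The condition \eqref{EffectivityCondition}, $\sum_{i=1}^s m_i\le nd+\min(n-s(d),\,s-n-2)$ with $1\le m_i\le d$, is precisely what keeps the system clear of all these non-linear obstructions. \textbf{The main obstacle} is to make this last point rigorous: to verify that \eqref{EffectivityCondition}, in the form appropriate to each exceptional divisor, is preserved both under the Cremona reduction of move (i) and under the two kinds of restriction of move (ii), and to pin down the finite list of base cases; the rest is the standard blow-up, Riemann--Roch and Cremona bookkeeping.
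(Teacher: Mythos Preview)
This theorem is not proved in the present paper; it is quoted from \cite{BraDumPos1} (Corollary~4.8 and Theorem~5.3 there), and the only hint the authors give about the method is the sentence immediately after the statement: ``This was proved in \cite{BraDumPos1, DumPos} by means of a complete cohomological classification of strict transforms in subsequently blown-up spaces of linear systems.'' Your outline for the first assertion --- blow up along the linear base locus, identify \eqref{linvirtdim} with an Euler characteristic on the resulting $\widetilde X$, and prove $h^i(\widetilde X,\widetilde\LL)=0$ for $i\ge1$ by induction via restriction to exceptional divisors --- matches that description and is indeed the architecture used in \cite{BraDumPos1}. The Cremona step you insert is not actually needed for $s\le n+2$ (there the induction closes with restrictions alone), but it does no harm.

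For the second assertion your heuristic for the role of $s(d)$ is not right, and this is where the proposal has a genuine gap. You say $s(d)$ enters through ``B\'ezout thresholds for the secant varieties of $C$ and their joins with linear spans of the points''. It does not. The integer $s(d)$ counts the base points with $m_i=d$; each such point forces every member of $\LL$ to be a cone with vertex $p_i$, so one may project to $\PP^{n-1}$ and drop that point. The term $n-s(d)$ in $\min(n-s(d),\,s-n-2)$ is the bookkeeping for how many such cone reductions are available, not a secant-variety threshold. For instance, when $s=n+3$ and $s(d)=0$ the condition \eqref{EffectivityCondition} becomes $\sum m_i\le nd+1$, i.e.\ $k_C\le1$, which by Remark~\ref{no multiple rnc} is precisely the range in which the rational normal curve does not obstruct; secant varieties $\sigma_t$ with $t\ge2$ and their joins never come into it, since $k_{\sigma_t}\le k_C$. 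So the ``main obstacle'' you flag is real, and the mechanism you propose for handling it would not produce the correct inductive invariant; the actual argument in \cite{BraDumPos1} tracks the pair $(n-s(d),\,s-n-2)$ through cone reductions and restrictions rather than through any secant geometry.
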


\begin{theorem}[\cite{BraDumPos1,CDDGP,CT}]\label{old effectivity}
If $s\le n+2$, then $\ls$ is non-empty if and only if 
\begin{equation}\label{effectivity n+2}
m_i\le d, \ \forall i=1\dots, s, \quad \sum_{i=1}^{s}m_i\le nd.
\end{equation}

Moreover if $s\ge n+3$ and \eqref{EffectivityCondition} is satisfied, then $\ls$ is non-empty.
\end{theorem}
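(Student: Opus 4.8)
The plan is to treat separately the equivalence for $s\le n+2$ and the implication for $s\ge n+3$, the latter being essentially a corollary of Theorem \ref{theorem n+3} together with a combinatorial estimate. For the necessity of \eqref{effectivity n+2} when $s\le n+2$, I would start from a nonzero form $F$ of degree $d$ with $\operatorname{mult}_{p_i}F\ge m_i$ and intersect it with a general line $\ell$ through $p_i$ (not contained in $F$, which exists since $F\neq0$) to get $m_i\le\operatorname{mult}_{p_i}(\ell\cap F)\le\deg(\ell\cap F)=d$. For the bound $\sum_i m_i\le nd$ I would use the rational normal curve: by the Veronese--Castelnuovo Theorem recalled above, $n+3$ points in general position lie on a unique such curve, so for any $s\le n+2$ the degree-$n$ rational normal curves through $p_1,\dots,p_s$ sweep out a dense open subset of $\PP^n$, and one of them, say $C$, is not contained in $F$; since $C\cong\PP^1$ is smooth at every $p_i$, B\'ezout's Theorem yields
\[
\sum_{i=1}^{s}m_i\le\sum_{i=1}^{s}\operatorname{mult}_{p_i}(F\cap C)\le\deg(F|_{C})=nd.
\]

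For the sufficiency when $s\le n+2$, I would argue by induction on $n$ (the case $n=1$ being immediate: a degree-$d$ effective divisor on $\PP^1$ absorbing $\sum_i m_i\le d$ prescribed multiplicities) and, for fixed $n\ge2$, by a secondary induction on $\sum_i m_i$. When $s\le n+1$, take the points to be coordinate points; then $\LL$ is spanned by the monomials $x_0^{a_0}\cdots x_n^{a_n}$ with $\sum_j a_j=d$ and $0\le a_j\le d-m_j$ at the chosen indices, which exist precisely when $m_j\le d$ for all $j$ and $\sum_j m_j\le nd$ (the sum bound being vacuous if $s\le n$). For $s=n+2$, reorder so that $m_1\ge\cdots\ge m_{n+2}$: if $m_{n+2}=0$ we are reduced to $s\le n+1$; if $m_1=d$ then every element of $\LL$ is a cone with vertex $p_1$, and projection from $p_1$ identifies $\LL$ with a system $\LL_{n-1,d}(m_2,\dots,m_{n+2})$ at $n+1$ general points of $\PP^{n-1}$, which is non-empty by the inductive hypothesis on $n$ because $\sum_{i\ge2}m_i\le nd-d=(n-1)d$; otherwise $1\le m_i\le d-1$ for all $i$, and multiplying by the hyperplane through $p_1,\dots,p_n$ reduces the problem to $\LL_{n,d-1}(m_1-1,\dots,m_n-1,m_{n+1},m_{n+2})$, whose multiplicities lie in $[0,d-1]$ and whose total multiplicity $\sum_i m_i-n\le n(d-1)$ is strictly smaller, so the secondary induction applies. (Alternatively one could reduce via the Cremona transformations of $\PP^n$ based at $n+1$ of the points.)

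For $s\ge n+3$, I would invoke Theorem \ref{theorem n+3}: when \eqref{EffectivityCondition} holds, $\LL$ is linearly non-special, i.e.\ $\dim\LL=\ldim(\LL)$, so it remains only to verify that \eqref{EffectivityCondition} forces $\ldim(\LL)>0$ --- equivalently, that the alternating sum \eqref{linvirtdim} is positive and that $\LL$ is not contained in a system of negative linear virtual dimension --- which is a direct combinatorial estimate using $m_i\le d$ and $\sum_i m_i\le nd+\min(n-s(d),s-n-2)$. I expect this last step, and with it the whole $s\ge n+3$ case, to be the main obstacle: it rests squarely on the linear non-speciality theorem, which is the substantial content of \cite{BraDumPos1}; by contrast the $s\le n+2$ case is elementary as above, the only delicate inputs being the general-position facts that rational normal curves through $\le n+2$ general points cover $\PP^n$ and that projection from $p_1$ preserves the multiplicities at the remaining points.
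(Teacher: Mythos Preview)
The paper does not give its own proof of this theorem: it is quoted as a known result from \cite{BraDumPos1,CDDGP,CT}, so there is no in-paper argument to compare your proposal against.

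Your treatment of the case $s\le n+2$ is correct and self-contained. The necessity via a rational normal curve through the $s$ points is clean (the family of such curves through $\le n+2$ general points does sweep out a dense open set of $\PP^n$, so one of them avoids the hypersurface), and the sufficiency via the double induction on $n$ and on $\sum_i m_i$---cone reduction when $m_1=d$, hyperplane splitting when $m_1\le d-1$---is a standard and sound strategy. The monomial count for $s\le n+1$ coordinate points is also fine.

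For $s\ge n+3$ your reduction is logically valid but incomplete. You invoke Theorem~\ref{theorem n+3} to obtain $\dim\LL=\ldim(\LL)$ and then assert that positivity of $\ldim(\LL)$ under \eqref{EffectivityCondition} is ``a direct combinatorial estimate''. You do not carry this out, and it is not as immediate as you suggest: under \eqref{EffectivityCondition} many linear cycles $L_{I(r)}$ can still satisfy $k_{I(r)}>r$ and contribute to the alternating sum \eqref{linvirtdim}, so one genuinely has to control those terms and also rule out containment in a system of negative linear virtual dimension. In the cited source the non-emptiness is not obtained in this two-step fashion; the inductive argument that establishes linear non-speciality simultaneously produces sections, so effectivity is a byproduct rather than a separate combinatorial check. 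Your outline would become a proof if you supplied that estimate, but as written the $s\ge n+3$ part remains a sketch that leans on the very references the theorem already cites.
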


Since Theorem \ref{old effectivity} gives a statement about linear systems we assumed that the coefficients $d$ and $m_i$ are positive. In order to translate the statement into the language of divisors on the blow-up of $\PP^n$ at points we relax the positivity assumption on the coefficients $m_i$'s and we obtain the following

\begin{corollary}\label{old effectivity divisors}
The effective cone of $\PP^n$ blown-up at $s\le n+2$ points in general position is described by \eqref{effectivity n+2} and the inequalities \begin{equation}\label{effectivity n+2 for divisors}
d\ge 0, \quad \sum_{i\in I}m_i\le nd,\ \forall I\subseteq\{1,\ldots,s\},\ |I|=n+1.
\end{equation}
\end{corollary}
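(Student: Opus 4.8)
The plan is to deduce both inclusions directly from Theorem~\ref{old effectivity} by peeling off the exceptional divisors attached to the non-positive multiplicities. Write a class on $X$ as $D=dH-\sum_{i=1}^{s}m_iE_i$, let $\pi\colon X\to\PP^n$ be the blow-down, and for such a $D$ set $P=\{i:m_i\ge 1\}$, so that $|P|\le s\le n+2$ and
\[
D=\Big(dH-\sum_{i\in P}m_iE_i\Big)+\sum_{i\notin P}(-m_i)E_i,
\]
with every coefficient $-m_i$ ($i\notin P$) non-negative. The effective cone of $X$ (which is closed and rational polyhedral, $X$ being a Mori dream space) and the cone cut out by \eqref{effectivity n+2}--\eqref{effectivity n+2 for divisors} are both closures of the cones on their integral points, so it suffices to match integral classes; we assume $D$ integral throughout.

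Suppose first that $D$ is effective. Then $\pi_*D$ is an effective divisor of degree $d$ on $\PP^n$, so $d\ge 0$; and pushing $D$ forward along the morphism forgetting all points but $p_i$ shows $\ls_{n,d}(m_i)$ is non-empty, forcing $m_i\le d$ (trivially so when $m_i\le 0$). For each $i$ with $m_i<0$ the restriction $\mathcal O_X(D)|_{E_i}\cong\mathcal O_{\PP^{n-1}}(m_i)$ has no nonzero section, so $E_i$ lies with multiplicity at least $-m_i$ in the support of every effective representative of $D$; subtracting these exceptional components we find that $dH-\sum_{i\in P}m_iE_i$ is effective, hence---pushing it down along the morphism forgetting the points $p_j$, $j\notin P$, and invoking Theorem~\ref{old effectivity}---that $\sum_{i\in P}m_i\le nd$. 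Since the discarded multiplicities are $\le 0$, we obtain $\sum_{i=1}^{s}m_i\le\sum_{i\in P}m_i\le nd$ and, for every $I$ of cardinality $n+1$, $\sum_{i\in I}m_i\le\sum_{i\in I\cap P}m_i\le\sum_{i\in P}m_i\le nd$. (If $d=0$ then $P=\emptyset$, since otherwise the peeled class would be pulled back from an empty linear system, and all the above is trivial.) This proves one inclusion.

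Conversely, suppose $D$ satisfies \eqref{effectivity n+2} and \eqref{effectivity n+2 for divisors}. Since each $E_i$ is effective, by the displayed decomposition it is enough to show that $dH-\sum_{i\in P}m_iE_i$ is effective; this class is pulled back from the blow-up of $\PP^n$ at the $|P|\le n+2$ points $\{p_i:i\in P\}$, so we only need to verify the hypotheses of Theorem~\ref{old effectivity} there. The inequality $m_i\le d$ for $i\in P$ is assumed; for $\sum_{i\in P}m_i\le nd$ we split into cases: if $|P|\le n$ it follows from $m_i\le d$ and $d\ge 0$; if $|P|=n+1$ it is precisely the inequality of \eqref{effectivity n+2 for divisors} for $I=P$; and if $|P|=n+2$ then $s=n+2$ and $P=\{1,\dots,s\}$, so it is the inequality $\sum_{i=1}^{s}m_i\le nd$ of \eqref{effectivity n+2}. (When $d=0$ one has $P=\emptyset$ by the first inequality of \eqref{effectivity n+2}, and $D=\sum_{i}(-m_i)E_i$ is obviously effective.) Hence $dH-\sum_{i\in P}m_iE_i$, and therefore $D$, is effective.

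I expect the only genuinely delicate point to be the trichotomy on $|P|$ in the last paragraph: it is exactly there that the full family of inequalities $\sum_{i\in I}m_i\le nd$ over the $(n+1)$-subsets $I$, together with the global inequality $\sum_{i=1}^{s}m_i\le nd$, turns out to be what is needed to stay inside the range governed by Theorem~\ref{old effectivity}. The remaining ingredients---the reduction to integral classes, the behaviour of effectivity under the forgetful blow-downs, and the fact that $E_i$ is forced into the base locus once $m_i<0$---are routine. One could also establish the first inclusion more geometrically, by pairing an effective $D$ with the strict transforms of a general line through one of the points and of a general rational normal curve of degree $n$ through $n+1$ of the points, both of which move in families sweeping out $X$.
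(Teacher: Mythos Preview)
Your proof is correct and, in fact, supplies the details that the paper omits: the corollary is stated without proof, prefaced only by the remark that one ``relaxes the positivity assumption on the coefficients $m_i$'' in Theorem~\ref{old effectivity}. Your decomposition $D=(dH-\sum_{i\in P}m_iE_i)+\sum_{i\notin P}(-m_i)E_i$ and the trichotomy on $|P|$ are exactly the natural way to make that remark precise, so your argument is the intended one fleshed out.
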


We remark that \eqref{EffectivityCondition}
is also sufficient condition for the base locus of $\ls$ to not contain any multiple rational normal curve.
In Section \ref{base locus lemma for secants} we will give a sharp base locus lemma for the rational normal curve for all linear
systems based at $n+3$ general points. Moreover 
in Section \ref{effectivity for secants} we will give  necessary and sufficient conditions for a linear system in $\PP^n$ based at $n+3$ general points to be non-empty.

\subsection{Connection to the Fr\"oberg-Iarrobino Conjecture}\label{Froberg-Iarrobino}
The  problem of determining the dimension of linear systems with assigned multiple points is related 
to the Fr\"oberg-Iarrobino Weak and Strong Conjectures \cite{Froberg, Iarrobino}, which give a predicted value for the Hilbert series of
an  ideal generated by $s$ general powers of linear forms  in the polynomial ring with $n+1$ variables.
 Such an ideal corresponds, via apolarity, to the ideal of a collection of fat points, therefore
it is possible to give a geometric interpretation of this conjecture, as Chandler pointed out 
\cite{Chandler}.  See also \cite[Sect. 6.1]{BraDumPos1} for more details.

In terms of our Definition \ref{new-definition} the Weak Conjecture can be stated as follows:
the dimension of 
 a \emph{homogeneous} linear system, i.e. one for which all points have the same multiplicity, 
is bounded below by its linear expected dimension. 

\begin{conjecture}[Weak Fr\"oberg-Iarrobino  Conjecture]\label{WFI}
The linear system $\ls=\ls_{n,d}(m^s)$ satisfies $\dim(\ls)\ge\ldim(\ls)$.
\end{conjecture}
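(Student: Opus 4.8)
The plan is to prove the inequality $\dim(\ls)\ge\ldim(\ls)$ for homogeneous systems $\ls=\ls_{n,d}(m^s)$ by exhibiting an explicit family of divisors (or, equivalently, polynomials in the apolar ideal) whose span has at least the linear expected dimension, or by degenerating the $s$ points to a special configuration where the count becomes transparent. The first step is to dispose of the easy regime: when all the linear multiplicities $k_{I(r)}$ vanish, Definition \ref{new-definition} gives $\ldim(\ls)=\edim(\ls)=\max(\vdim(\ls),0)$, and the inequality $\dim(\ls)\ge\edim(\ls)$ is the trivial parameter count (the virtual dimension is always a lower bound for the actual one). So the content is entirely in the regime where some $k_{I(r)}>r$, i.e. the linear cycles $L_{I(r)}$ genuinely sit in the base locus, and the claim becomes: after subtracting off (with inclusion--exclusion) the precise contribution of those linear cycles — which by Lemma \ref{linear base locus} have multiplicity exactly $k_{I(r)}$ — what remains is a ``linearly non-special'' system, whose dimension is at least the residual virtual count.

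The second step, which I expect to be the technical heart, is to realize the linear virtual dimension \eqref{linvirtdim} as an honest Euler characteristic: I would blow up $\PP^n$ at the $s$ points, and then successively blow up the strict transforms of the linear cycles $L_{I(r)}$ in order of increasing dimension, obtaining a variety $\pi\colon Y\to\PP^n$ on which the strict transform $\widetilde\ls$ of $\ls$ is the system $\pi^*(d\,H)-\sum m\,E_i-\sum(k_{I(r)}-r)E_{I(r)}$ (the coefficient $k_{I(r)}-r$ being exactly the multiplicity of containment minus the dimension, cf. the discussion after Definition \ref{new-definition}). On $Y$ one computes that $h^0(Y,\widetilde\ls)$ equals $h^0(\PP^n,\ls)$, because each blow-up of a linear center with the stated multiplicity removes precisely the base contribution and no more; and that $\chi(Y,\widetilde\ls)$, expanded via the blow-up formulas for the cohomology of $\PP^r$-bundles, telescopes exactly into the alternating sum \eqref{linvirtdim}. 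Then $\dim(\ls)+1=h^0(Y,\widetilde\ls)\ge\chi(Y,\widetilde\ls)=\text{(linear virtual dim)}+1$, provided the higher cohomology $h^i(Y,\widetilde\ls)$, $i\ge1$, vanishes — which is exactly where homogeneity of the system is used: for $\ls_{n,d}(m^s)$ one can appeal to the Ciliberto--Miranda style degeneration of the points onto a hyperplane (or onto a rational normal curve) together with a Kawamata--Viehweg type vanishing on the semistable model, to kill the higher cohomology after the linear blow-ups.

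Alternatively, and perhaps more robustly, the third step could replace the vanishing argument by a direct degeneration à la \cite{CM1,BraDumPos2}: specialize the $s$ general points to a configuration lying on a hyperplane $\PP^{n-1}\subset\PP^n$ (or split them between a hyperplane and a residual general point), apply the Horace differential lemma to split the cohomology of $\ls_{n,d}(m^s)$ into a restricted system on $\PP^{n-1}$ and a residual system on $\PP^n$, and induct on $n$ (the base case $n=2$ being the Segre--Harbourne--Gimigliano--Hirschowitz framework, where the homogeneous case is known, e.g. by Nagata/Evain-type bounds for uniform multiplicities). The bookkeeping obstacle — the main obstacle — is matching the inclusion--exclusion over multi-indices $I(r)$ on the two sides of the Horace split: a linear cycle $L_{I(r)}$ either lies entirely in the specialized hyperplane, or meets it in a smaller-dimensional cycle, and one must check that the $k_{I(r)}$ recombine correctly so that the residual and restricted linear expected dimensions add up to $\ldim(\ls)$. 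Once that combinatorial identity is in place, the induction closes and the lower bound $\dim(\ls)\ge\ldim(\ls)$ follows; I would expect this identity, rather than any geometric input, to be the delicate point, and I would isolate it as a separate lemma on the arithmetic of the $k_{I(r)}$ under restriction to a coordinate hyperplane.
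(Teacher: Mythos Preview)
The statement you are attempting to prove is presented in the paper as an open \emph{conjecture}, not as a theorem; there is no proof in the paper to compare against. The paper merely reformulates the Weak Fr\"oberg--Iarrobino Conjecture in the language of Definition~\ref{new-definition} and moves on.

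Your proposal does not close the gap either. The second step hinges on the claim that $h^0(Y,\widetilde\ls)\ge\chi(Y,\widetilde\ls)$, which you justify by asserting that the higher cohomology $h^i(Y,\widetilde\ls)$ vanishes for $i\ge1$; but such vanishing would give \emph{equality} $\dim(\ls)=\ldim(\ls)$, i.e.\ the Strong Conjecture~\ref{FI}, which is known to fail (see the listed exceptions). For the mere inequality $h^0\ge\chi$ you would need $h^1-h^2+h^3-\cdots\ge0$, and there is no Kawamata--Viehweg type statement available on these iterated blow-ups that delivers this for arbitrary $s$: the strict transform $\widetilde\ls$ is typically far from nef, and the intermediate cohomology groups are precisely the unknown quantities. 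The appeal to ``homogeneity'' does not help here, since the exceptional cases in Conjecture~\ref{FI} are themselves homogeneous.

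The alternative third step has the same defect at its base: you invoke the $n=2$ case as ``known, e.g.\ by Nagata/Evain-type bounds'', but Nagata's Conjecture is open for $s\ge10$ general points, and the inequality $\dim(\ls_{2,d}(m^s))\ge\ldim(\ls_{2,d}(m^s))$ for arbitrary $s$ is not a consequence of any known result---it is essentially the planar instance of the very conjecture under discussion. The Horace/degeneration machinery you sketch would, if it worked, also prove the planar case, which would be a major advance; the combinatorial bookkeeping you flag as ``the delicate point'' is in fact the easy part compared to the missing semicontinuity/vanishing input.
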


Moreover, the Strong Conjecture states that a homogeneous linear system is always
 linearly  non-special besides a list of exceptions.

\begin{conjecture}[Strong Fr\"oberg-Iarrobino  Conjecture]\label{FI}
The linear system  $\ls=\ls_{n,d}(m^s)$ satisfies $\dim(\ls)=\ldim(\ls)$ except perhaps 
when one of the following conditions holds: 
\begin{enumerate}[(i)]
\item
$s=n+3$; 
\item
$s=n+4$;
\item
$n=2$ and $s=7$ or $s=8$; 
\item 
$n=3$, $s=9$ and {$d\ge 2m$}; 
\item
 $n=4$, $s=14$ and $d=2m$, $m=2$ or $3$.
 \end{enumerate}
\end{conjecture}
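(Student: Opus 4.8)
The plan is to read the Strong Fröberg--Iarrobino Conjecture through the apolarity dictionary: via Macaulay's inverse systems, $\dim(\ls_{n,d}(m^s))$ is determined by the Hilbert function in degree $d$ of the ideal generated by $s$ general $m$-th powers of linear forms, so the statement is exactly that every \emph{homogeneous} system $\ls_{n,d}(m^s)$ outside the listed families is \emph{only linearly obstructed} in the sense of Section \ref{resultsBraDumPos}. I would therefore organize the proof by the value of $s$ and reduce everything to linear non-speciality statements. A first block of parameters comes for free: by Theorem \ref{theorem n+3}, $\ls_{n,d}(m^s)$ is linearly non-special whenever $sm \le nd + \min(n-s(d),\,s-n-2)$ (with $s(d)=s$ if $m=d$ and $0$ otherwise), which already covers a full-dimensional cone of triples $(d,m,s)$ for each $n$.

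The core of the argument is to propagate linear non-speciality to the remaining ``generic'' range --- all $(n,d,m,s)$ with $s\neq n+3,n+4$ and outside the sporadic planar, threefold and fourfold cases --- by a Ciliberto--Miranda type degeneration. I would specialise a suitably chosen subset of the $s$ general points onto a hyperplane $H\cong\PP^{n-1}$, obtaining a flat degeneration of the blown-up space whose central fibre is a union $X_1\cup X_2$ glued along a copy $H'$ of $H$, and then read off $\dim(\ls)$ from the Mayer--Vietoris sequence relating the limit system to the \emph{trace} system on $H'\subset X_1$ and the \emph{restricted} system on $X_2$. Because the system is homogeneous, the trace and restricted systems are again (almost) homogeneous, with either fewer points or strictly smaller degree, which sets up a double induction on $d$ and $s$. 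The delicate bookkeeping is that the iterated blow-ups of the linear cycles $L_{I(r)}$ must be carried along the degeneration, and one has to track how each multiplicity $k_{I(r)}$ of Lemma \ref{linear base locus} distributes between the two components of the central fibre; Lemma \ref{base locus lemma rnc} enters precisely to certify that \emph{no} non-linear cycle --- the rational normal curve through $n+3$ of the points, its secant varieties, or cones over them --- appears in the base locus of the limit system except in the families that the conjecture exempts.

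For the remaining items I would argue as follows. Cases (i) $s=n+3$ and (ii) $s=n+4$ are exempted, so nothing is required there; but to see that the exemption list is the \emph{correct} one I would reduce $s=n+3$ to Conjecture \ref{conjecture} of this paper: the refined count $\sldim$ of Definition \ref{new definition rnc} already subtracts the contribution of the rational normal curve and its secant joins, so granting Conjecture \ref{conjecture} a homogeneous $\ls_{n,d}(m^{n+3})$ fails linear non-speciality exactly when one of those cycles sits in the base locus with multiplicity exceeding its codimension. This is not ad hoc: the sporadic items are themselves secant-variety obstructions --- for $n=3$, $s=9$, $d\ge 2m$, the Laface--Ugaglia quadric through nine points, handled by \cite{laface-ugaglia-TAMS} together with the degeneration results of \cite{BraDumPos2, CM1}; for $n=4$, $s=14$, the quadric/secant phenomenon behind the Alexander--Hirschowitz list, handled by \cite{AlHi}; and for $n=2$, $s=7,8$, the higher-degree $(-1)$-curves (conics and cubics through subsets of the points), handled by \cite{Harbourne3, Hir2}. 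For $n=2$ and $s\ge 9$ I would invoke the planar theory as far as it is known.

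The hard part will be twofold. First, inside the degeneration induction one must control the limit system tightly enough to conclude \emph{linear} non-speciality rather than merely the classical expected dimension, and the interaction between the degeneration and the iterated blow-ups of the $L_{I(r)}$ is exactly where the argument can break; producing a ``good'' central fibre (no unexpected base components on $H'$, correct splitting of the $k_{I(r)}$) for \emph{all} homogeneous parameters is the genuine technical bottleneck. Second --- and decisively --- the statement is unconditional only if one already knows Conjecture \ref{conjecture} for $s=n+3$ and the Harbourne--Hirschowitz/Nagata circle of conjectures for $n=2$ with many points, since, for instance, the regime $n=2$, $s\ge 10$, $d<m\sqrt s$ of Strong Fröberg--Iarrobino is equivalent to Nagata's emptiness statement there. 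What I can realistically deliver is therefore the reduction just sketched: the generic range by the degeneration induction, the sporadic higher-dimensional families from \cite{AlHi, laface-ugaglia-TAMS}, and the planar cases from \cite{Harbourne3, Hir2}, with Conjecture \ref{conjecture} retained as the outstanding input for $s=n+3$.
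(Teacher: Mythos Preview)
The statement you are addressing is a \emph{conjecture}, not a theorem: the paper states the Strong Fr\"oberg--Iarrobino Conjecture (in Chandler's geometric formulation) purely as background in Section~\ref{Froberg-Iarrobino}, and makes no attempt to prove it. There is therefore no ``paper's own proof'' to compare against. You effectively concede this yourself in the final paragraph, when you observe that the range $n=2$, $s\ge 10$, $d<m\sqrt{s}$ is equivalent to Nagata's conjecture, and that the case $s=n+3$ would require Conjecture~\ref{conjecture} of this very paper --- both of which are open. What you have written is not a proof but a sketch of how one might hope to reduce the conjecture to other open problems.

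Even as a reduction, the proposal has real gaps. The degeneration induction you outline for the ``generic range'' is the heart of the matter and is not available in the literature: carrying the full tower of blow-ups of the linear cycles $L_{I(r)}$ through a Ciliberto--Miranda degeneration so as to conclude \emph{linear} non-speciality (rather than merely $\dim=\edim$) has not been done for general $n$ and $s$, and Theorem~\ref{theorem n+3} only covers the regime $sm\le nd+\min(n-s(d),s-n-2)$, which is far from the full complement of the exempted families. Your appeal to Lemma~\ref{base locus lemma rnc} to exclude non-linear base loci is also insufficient for $s>n+3$: that lemma treats only secant varieties to rational normal curves through $n+3$ of the points and their joins with linear spans, whereas for larger $s$ one expects further obstructions (already for $n=3$, $s=9$ the relevant quadric is not of this type). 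In short, the honest status is that Conjecture~\ref{FI} remains open, and the paper's contribution is to refine its case~\textit{(i)} via Definition~\ref{new definition rnc} and Conjecture~\ref{conjecture}, not to prove it.
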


{Notice that in \cite{Chandler} the formulation of case \emph{(iv)} requires the condition $d=2m$, 
anyway it is known (see for example \cite{BraDumPos2}) that for any degree $d\ge 2m$ the linear system $\ls_{n,d}(m^s)$ is linearly special for $m$ high enough.
}

{
In this paper we conjecture that linear systems with $n+3$ points are linearly special only if they contain in their base locus the rational normal curve given by the $n+3$ points, its secant varieties 
or their joins with linear subspaces spanned by subsets of the set of the $n+3$ points (Conjecture \ref{conjecture}).}
Moreover, we give a new definition of expected dimension, the \emph{secant linear dimension} $\sldim$ (see Definition \ref{new definition rnc}),
that provides a correction term for  $\ldim$.
In particular, in the homogeneous case this {completes} the Strong Fr\"oberg-Iarrobino  Conjecture.

\begin{remark}
 It would be interesting to 
extract the Hilbert series of ideals generated by $n+3$ powers of linear forms  from our formula of $\sldim$, Definition \ref{new definition rnc}.
\end{remark}

\subsection{General vision}

{We can interpret the base locus lemma for rational normal curves and related cycles 
$-$that we prove in Section \ref{base locus lemma for secants}$-$
and the definition of $\sldim$ both as extensions of the results  
contained
in our previous works \cite{BraDumPos1,DumPos}: Lemma \ref{linear base locus}
and Definition \ref{new-definition}.
}

{We expect rational normal curves and related cycles to appear as obstruction to the non-speciality also in the case of linear systems with $s\ge n+3$ base points.}
A natural generalization of  Conjecture \ref{WFI} would be that 
$\sldim$ provides a lower bound for the dimension
 of any general non-homogeneous linear system.
We plan to further investigate this problem.
 We chose to dedicate this work to the case of $n+3$ points
because it is the first case where non-linear obstructions appear and was not understood 
before for the general case of $\PP^n$.

\section{Secant varieties to rational normal curves and Cremona transformations}
\label{secants}

In this section we collect a series of well-known geometric aspects
of secant varieties to rational normal curves. The first important point is the following.

\begin{theorem}[Veronese]
There exists a unique rational normal curve of degree $n$ passing through 
 $n+3$ points in general position in $\PP^n$.
\end{theorem}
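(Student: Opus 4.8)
The plan is to prove both existence and uniqueness. For \emph{existence}, I would start from the standard fact that a rational normal curve of degree $n$ in $\PP^n$ is projectively equivalent to the image of the Veronese-type embedding $\PP^1 \to \PP^n$, $[s:t] \mapsto [s^n : s^{n-1}t : \cdots : t^n]$, and that its projective automorphism group acts on it as $\PGL_2$ acting on $\PP^1$. The group $\PGL_{n+1}$ acts transitively on $(n+2)$-tuples of points in general position, so after a projective change of coordinates I may assume the first $n+2$ of the given points are the standard coordinate points $e_1,\dots,e_{n+1}$ together with $[1:1:\cdots:1]$. One then writes down the explicit rational normal curve through these $n+2$ points — parametrized so that the $n+1$ coordinate points correspond to $0,\infty$ and the remaining $n-1$ "roots of unity"-type parameters, and $[1:\cdots:1]$ to a further parameter — and observes that the space of rational normal curves through these $n+2$ fixed points is a one-parameter family (the residual freedom being exactly the position of one more marked point on $\PP^1$, i.e.\ a cross-ratio). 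Since the family is parametrized by $\PP^1$ minus finitely many points and the $(n+3)$rd point is general, one value of the parameter places the curve through it. Alternatively and more cleanly, I would phrase existence via the classical fact that $n+3$ general points impose independent conditions on quadrics, so the quadrics through them cut out precisely the rational normal curve; this is the Castelnuovo/Veronese argument.

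For \emph{uniqueness}, the key input is that a rational normal curve of degree $n$ is cut out scheme-theoretically by quadrics — indeed by the $2\times 2$ minors of a $2\times n$ Hankel matrix of linear forms. Suppose $C$ and $C'$ are two rational normal curves of degree $n$ both passing through the $n+3$ general points $p_1,\dots,p_{n+3}$. Consider the linear system $Q$ of quadrics vanishing on all $n+3$ points. A dimension count gives $\dim |Q| = \binom{n+2}{2} - 1 - (n+3) = \binom{n+1}{2} - 2$, which for general points is the expected dimension; this is precisely $\dim$ of the space of quadrics containing a fixed rational normal curve. Hence $Q$ is exactly the ideal of quadrics through $C$, and likewise through $C'$, forcing $C$ and $C'$ to have the same homogeneous ideal in degree $2$; since both are cut out by their degree-$2$ pieces, $C = C'$.

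The main obstacle is the genericity bookkeeping: one must verify that $n+3$ general points do impose independent conditions on quadrics and that the resulting linear system of quadrics is base-point-free away from a single rational normal curve (rather than, say, a reducible or degenerate scheme). I would handle this by a degeneration/specialization argument — exhibit one explicit configuration of $n+3$ points (for instance points on an explicit rational normal curve, together with two more general ones) for which the quadric system has the expected dimension and cuts out exactly that curve, and then invoke semicontinuity: the conditions "quadrics impose independent conditions" and "the base scheme is a rational normal curve" are open, so they hold for the general configuration. An alternative route to uniqueness that sidesteps some of this is a direct induction on $n$ via projection from one of the points $p_i$: projecting a degree-$n$ rational normal curve through $p_i$ from $p_i$ yields a degree-$(n-1)$ rational normal curve in $\PP^{n-1}$ through the images of the remaining $n+2$ points, which are again general, and uniqueness in $\PP^{n-1}$ plus control of the fiber of the projection recovers uniqueness in $\PP^n$; the base case $n=2$ is the classical uniqueness of the conic through five general points.
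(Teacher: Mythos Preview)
The paper does not prove this theorem at all: immediately after the statement it simply says ``This theorem is classically known and its first proof is due to Veronese \cite{veronese}, although it is often attributed to Castelnuovo,'' and moves on. So there is no paper-proof to compare your proposal against; the authors treat it as background.

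That said, your main uniqueness argument has a genuine gap. The dimension count you give is incorrect for $n\ge 3$. The (affine) space of quadrics in $\PP^n$ through $n+3$ general points has dimension $\binom{n+2}{2}-(n+3)$, while the space of quadrics containing a fixed rational normal curve has dimension $\binom{n}{2}$; these differ by $n-2$, not by zero. Geometrically, the reason is clear: a quadric meets a degree-$n$ rational normal curve in $2n$ points, and for $n\ge 3$ one has $n+3\le 2n$, so a quadric can pass through $n+3$ points of $C$ without containing $C$. Thus the linear system $Q$ of quadrics through the points is strictly larger than the ideal of $C$ in degree $2$, and you cannot conclude $C=C'$ from equality of their degree-$2$ pieces in $Q$. (Your argument is correct exactly when $n=2$, which is the conic-through-five-points case.) The same issue undercuts your ``cleaner'' existence argument via quadrics: for $n\ge 3$ the base locus of the quadrics through $n+3$ general points is not a rational normal curve, it is just the $n+3$ points themselves.

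Your alternative route --- existence via normalizing $n+2$ of the points and sweeping a one-parameter family, and uniqueness by projecting from one of the $p_i$ and inducting on $n$ --- is the sound strategy and is essentially how the classical proof goes. If you want to salvage a quadric-style argument, you would need to work with higher-degree hypersurfaces (degree roughly $n/2$) or with the full Hankel determinantal description, not with quadrics alone.
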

This theorem is classically known and its first proof is due to Veronese
\cite{veronese}, although it is often 
attributed to Castelnuovo.

In this section and throughout this paper we will adopt the following notation. 
Let $p_1,\dots,p_{n+3}\in\PP^n$ general points,  let $C$ be the rational normal 
curve of degree $n$ interpolating them and, for every $t\ge1$, let  
$\sigma_t:=\sigma_t(C)\subset\PP^n$ be
the variety of $t$-secant $\PP^{t-1}$'s to $C$. In this notation we have $\sigma_1=C$. 

Rational normal curves  are never secant defective; in particular we have 
the following formula for the secant dimension:
$$
\dim(\sigma_t)=\min(n,2t-1).
$$
Moreover rational normal curves 
are of minimal secant degree if $2t-1<n$, see \cite{CR}: 
$$\deg(\sigma_t)={{n-t+1}\choose{t}}.$$
Secant varieties are highly singular, in particular for $t\ge2$, $2t-1<n$, we have 
 $\sigma_{t-1}\subset \Sing(\sigma_t)$. 
Moreover the multiplicity of $\sigma_t$ along $\sigma_{\tau}$, for all
$1\le \tau<t$, satisfies the following (see e.g. \cite{CR})):
$$
\mult_{C}(\sigma_t)={{n-t}\choose{t-1}},\quad  
 \quad \mult_{\sigma_{\tau}}(\sigma_t)\ge{{n-t-\tau+1}\choose{t-\tau}}.
$$

\subsection{Cones over the secant varieties to the rational normal curve}
In this section, we consider  cones over the $\sigma_t$ 
with vertex spanned by a subset of the base points. 
Let $I\subset\{1,\dots,n+3\}$ with $|I|=r+1$. We use the conventions $|\emptyset|=0$ and $\sigma_0=\emptyset$.
Let us denote by 
\begin{equation}\label{cones}
\J(L_I,\sigma_t)
\end{equation}
 the join of $L_I$ and $\sigma_t$.

Recall that $\sigma_t=\J(\sigma_{t-1},C)=\J(\sigma_{t-2},\sigma_2)$ etc. 
Notice also that $\J(L_I,\sigma_t)\subset\sigma_{|I|+t}$.

The dimensions of such joins can be easily computed:
\begin{equation}\label{dimension cone}
r_{I,\sigma_t}:=\dim(\J(L_I,\sigma_t))=\dim(L_I)+\dim(\sigma_t)+1=|I|+2t-1.
\end{equation}

\subsection{Divisorial cones}\label{section divisorial cones}

When $\J(L_I,\sigma_t)$ is a hypersurface, namely when $r_{I,\sigma_t}=n-1$ that is  $I$ is such that $|I|=n-2t$,
we can characterize
these cones as the unique section of a certain linear system of hypersurfaces  
of $\PP^n$ interpolating points $p_1,\dots,p_{n+3}$ with multiplicity.

We will denote by $\ls_{n,d}(m_1,\dots,m_s)$ the linear system of degree$-d$
hypersurfaces of $\PP^n$ interpolating the $n+3$ points with multiplicity 
$m_1,\dots,m_s$ respectively.

We first discuss the case when $\sigma_t$ is a hypersurface.
Precisely, when $n=2t$, $I=\emptyset$, we have that $\sigma_t$ is a degree $(t+1)$
 hypersurface with multiplicity $t$ along $C$ 
and 
in particular at the fixed points $p_1,\dots,p_{n+3}$. 
 In this notation we have that $\sigma_t$ belongs to the 
the linear  system $\ls_{2t,t+1}(t^{2t+3})$. Moreover one can prove that it is the only
element satisfying the interpolation condition, see also  Section \ref{multiples secants} (Proposition \ref{multiples of secant}).
For instance for $t=1$ one obtains the plane conic through five points,
 $\ls_{2,2}(1^5)$, for $t=2$ one obtains $\ls_{4,3}(2^7)$. 

\begin{remark}\label{secant is cod 1}
In Section \ref{base locus lemma for secants} (Corollary \ref{secant singular locus}), we will show that, when $n=2t$,
$\sigma_t$ has multiplicity exactly $t-\tau+1$ on
 $\sigma_{\tau}$, for all $1\le \tau<t$.
\end{remark}

Assume now that  $\sigma_t$ has higher codimension in $\PP^n$. 
Fix $I$ such that  $|I|=n-2t\ge1$ and consider 
 $\pi_I:\PP^n \dashrightarrow \PP^{2t}$  the projection from the linear subspace 
$L_I$. Denote by  $C':=\pi_I(C)$ the projection of $C$ and $\sigma'_t:=
\pi_I(\sigma_t(C))$ the projection of its $t$-secant variety. 
Then  $C'$ is a rational normal curve 
of degree $2t$ and $\sigma'_t=\sigma_t(C')$ is the $t$-secant variety to $C'$.
Hence   the hypersurface $\J(L_I,\sigma_t)$ is the cone with vertex  the linear subspace $L_I$ 
over the secant variety $\sigma'_t$.

We  conclude that for any $I$ such that $|I|\ge0$,  the following formula holds:
\begin{equation}\label{cones as divisors}
\J(L_I,\sigma_t)=\ls_{n,t+1}((t+1)^{n-2t},t^{2t+3}).
\end{equation}

We mention that in \cite[Theorem 2.7]{CT} the authors prove that divisors 
of the form \eqref{cones as divisors}
are the rays of the effective cone $\Eff_{\rr}(X)$ (see also Section \ref{movable cone}).

\subsection{The standard Cremona transformation}\label{cremona}
We recall that the \emph{standard Cremona transformation} of $\PP^n$
 is the birational transformation 
defined by the following rational map:
$$
\textrm{Cr}:(x_0:\dots: x_n) \to (x_0^{-1}:\dots: x_n^{-1}),
$$
see  e.g. \cite{Dolgachev}.
Let $\ls=\ls_{n,d}(m_1,\dots,m_s)$ be a linear system based on $s$ points
 in general position;
 we can assume, without loss of generality, that the first $n+1$ are the coordinate points.
The map $\Cr$ can be seen as the morphism associated to the linear system $\ls_{n,n}((n-1)^{n+1})$.
This induces an automorphism of the Picard group of the $n$-dimensional space blown-up at $s$ points
by sending the strict transform of $\ls_{n,d}(m_1,\dots,m_s)$ into the strict transform of 
$$
\textrm{Cr}(\ls):= \ls_{n,d-c}(m_1-c,\dots,m_{n+1}-c, m_{n+2},\dots, m_s)
$$
where 
$$c :=  m_1+\cdots+m_{n+1}-(n-1)d.$$
We have the following equality
 \begin{equation}\label{Cremona preserves dim}
\dim(\ls)=\dim(\textrm{Cr}(\ls)).\end{equation}

If $c\le 0$ we will say that the linear system $\ls$ is \emph{Cremona reduced}.

\begin{remark}\label{cones cremona} One can check that the join divisor  
$\J(L_I,\sigma_t)$ 
is in the orbit of the Weyl 
group of an exceptional divisor. 
To see this, order the multiplicities decreasingly and apply the Cremona action to the 
first $n+1$ points $t+1$ times. 
Indeed,
$$c(\ls_{n,t+1}((t+1)^{n-2t},t^{2t+3}))=1
$$ 
therefore 
$$\Cr(\ls_{n,t+1}((t+1)^{n-2t},t^{2t+3}))=\ls_{n,t}(t^{n-2t+2},(t-1)^{2t+1}).$$
This proves the claim 
since one can recursively replace $t$ by $t-1$ until $t=0$.
\end{remark}

\section{Base locus lemma}\label{base locus lemma for secants}

In this section we give a sharp base locus lemma for the rational normal curve,
 and (cones over)
its secant varieties, that generalizes Lemma \ref{linear base locus}
 from the case of at most $n+2$ points to the case of arbitrary number of points $s$. 

If $s\ge n+3$, as in Section \ref{secants}, we denote by $C$ the unique rational normal curve
through any  subset of $n+3$ points, say $p_1,\dots, p_{n+3}$, and by $\sigma_t$ its $t$-th secant variety.
We denote by $J(I,\sigma_t)$ the join between any index set $I\subset\{1,\dots,s\}$ and $\sigma_t$, and by $r_{I,\sigma_t}$ 
its dimension.
For $s=n+3$, this notions coincide with the ones introduced in \eqref{cones} and \eqref{dimension cone}.

To a linear system $\ls_{n,d}(m_1,\dots,m_{n+3},\dots,m_{s})$ we associate the following integers:
\begin{align}
k_C&:=\sum_{i=1}^{n+3} m_i-nd, \label{multiplicity rnc}\\
k_{I,\sigma_t}&:=\sum_{i\in I}m_i+tk_C-(|I|+t-1)d. \label{multiplicity cone}
\end{align}
Notice that, by setting 
$$
M:=\sum_{i=1}^{n+3}m_i,
$$
one can write 
\begin{equation}\label{expanded}
%%%\ \ \ k_{I,\sigma_t}=tM+\sum_{i\in I}m_i-((t+1)n-t)d.
\ \ \ k_{I,\sigma_t}=tM+\sum_{i\in I}m_i-
{
((n+1)t+|I|-1)d.
}
\end{equation}

Moreover, if  in \eqref{multiplicity cone}
 we replace $t=0$ we obtain 
$$k_I:=\sum_{i\in I} m_{i}-(|I|-1)d\  \quad \quad\quad $$
 (cfr. \eqref{mult k});
 if $|I|=0$ and $t=1$ we obtain $k_C:=k_{\emptyset,\sigma_1}=M-nd$;
if $|I|=0$ we obtain $k_{\sigma_t}:=tk_C-(t-1)d$.

The number $k_{\sigma_t}$ is the multiplicity of containment of a $t$-secant $\PP^{t-1}$ to $C$ in the
base locus of $\ls$. This is a straightforward consequence of the linear base locus lemma, 
knowing that $k_C$ is the multiplicity of containment of $C$.
In the next lemma we prove that in fact the whole $\sigma_t$ is contained in the base locus with
that multiplicity. 

\begin{lemma}[Base locus lemma]\label{base locus lemma rnc}
Let $\ls$ be an effective linear system with $s$ base points. 
In the same notation as above, let $C$ be the rational normal curve given by $n+3$ of them, fix any $I\subset\{1,\dots,s\}$ 
and $t\ge0$ such that $r_{I,\sigma_t}\le n-1$.

If $k_{I,\sigma_t}\ge1$, 
then the cone $\J(L_I,\sigma_t)$ is contained in the base locus with exact 
multiplicity $k_{I,\sigma_t}$.
\end{lemma}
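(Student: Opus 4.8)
The plan is to prove the two assertions — that $\J(L_I,\sigma_t)$ lies in the base locus with multiplicity \emph{at least} $k_{I,\sigma_t}$, and that this multiplicity is \emph{exactly} $k_{I,\sigma_t}$ — by separate arguments. For the lower bound I would argue by a degeneration/specialization together with B\'ezout, reducing the general join to the simplest case by means of the Cremona action recalled in Section~\ref{cremona}. Concretely, order the $m_i$ so that the first $n+1$ points carry the largest multiplicities and apply $\Cr$ repeatedly: by \eqref{Cremona preserves dim} the dimension is preserved, and by Remark~\ref{cones cremona} the join divisor $\J(L_I,\sigma_t)$ (in the case $r_{I,\sigma_t}=n-1$) is carried, after finitely many Cremona moves, into an exceptional divisor; tracking how the coefficient $k_{I,\sigma_t}$ transforms under $\Cr$ — it should be Cremona-invariant, just as $k_C$ and the linear $k_{I(r)}$ are — reduces the statement about the join to the already-known Lemma~\ref{linear base locus} for a linear cycle of the transformed system. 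For joins of higher codimension ($r_{I,\sigma_t}<n-1$) I would instead project from a generic $\PP^{n-2t-|I|-1}$ through some of the remaining points as in Section~\ref{section divisorial cones}, so that $\J(L_I,\sigma_t)$ becomes a \emph{divisorial} join $\sigma'_t$ on a $\PP^{2t+|I|}$ (a rational normal curve of that degree through the images of the points), and invoke the divisorial case there.

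For the sharpness (the "exact multiplicity" claim) the plan is to exhibit, for each relevant degree $d$, a hypersurface in $\ls$ that vanishes on $\J(L_I,\sigma_t)$ to order exactly $k_{I,\sigma_t}$ and no more. The natural candidates are products: $k_{I,\sigma_t}$ copies of the divisorial cone $\J(L_{I'},\sigma_t)$ from \eqref{cones as divisors} for a suitable enlargement $I'\supseteq I$, together with lines and linear cycles through the points, arranged so that the total degree is $d$, the multiplicity at each $p_i$ is exactly $m_i$, and the order of vanishing along $\J(L_I,\sigma_t)$ is precisely $k_{I,\sigma_t}$. Verifying that such a decomposition of $(d;m_1,\dots,m_s)$ exists whenever $k_{I,\sigma_t}\ge 1$ and $\ls$ is effective is a bookkeeping argument using the effectivity bounds of Theorem~\ref{old effectivity} (or Theorem~\ref{theorem n+3}); one needs the "leftover" system, after peeling off the $k_{I,\sigma_t}$ cones, to still be effective, which should follow because peeling off one copy of the cone drops $k_{I,\sigma_t}$ by exactly one while keeping all other relevant multiplicities non-negative.

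The main obstacle will be the sharpness direction, specifically producing the explicit hypersurface of the correct vanishing order along $\J(L_I,\sigma_t)$ and simultaneously along the intermediate secant varieties $\sigma_\tau$ and the linear cycles $L_{I(r)}$ — the multiplicity bookkeeping interacts, because a generic member of $\ls$ inherits forced multiplicities along all the $\J(L_{I(r)},\sigma_\tau)$ at once, and one must check these do not conspire to \emph{raise} the multiplicity along $\J(L_I,\sigma_t)$ beyond $k_{I,\sigma_t}$. Here I expect to use the multiplicity estimates for $\sigma_t$ along $\sigma_\tau$ recalled in Section~\ref{secants} (and the sharp version in Corollary~\ref{secant singular locus}) to control the contribution of each factor, and to reduce, via Cremona, to a configuration where the count is transparent. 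A secondary subtlety is the degenerate cases $t=0$ (where the statement must specialize to Lemma~\ref{linear base locus}) and $|I|=0$, $t=1$ (the bare rational normal curve), which should be handled first as base cases of the induction on $t$ and on $|I|$.
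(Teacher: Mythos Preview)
Your plan is workable in spirit but substantially more complicated than necessary, and it contains one factual slip. The paper's proof does not split into separate lower- and upper-bound arguments, nor does it iterate Cremona or project to lower-dimensional space; instead it gets the \emph{exact} multiplicity in one stroke from the linear base locus lemma (Lemma~\ref{linear base locus}) by choosing the right linear probe. Concretely: for $I=\emptyset$, $t=1$, a \emph{single} Cremona based at $n+1$ of the points sends $C$ to the line through the remaining two (this is the Carlini--Catalisano result), so the exact multiplicity of $C$ in $\ls$ equals the exact multiplicity $k_{I(1)}$ of that line in $\Cr(\ls)$, which Lemma~\ref{linear base locus} computes as $m_{n+2}+m_{n+3}-(d-c)=k_C$. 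For $\sigma_t$ with $t\ge 2$, a general point lies on a general secant $(t-1)$-plane spanned by $t$ general points of $C$; each of those $t$ points is a base point of multiplicity $k_C$ for $\ls$, so Lemma~\ref{linear base locus} gives exact multiplicity $tk_C-(t-1)d=k_{\sigma_t}$ along that plane, and semicontinuity handles the limits. For the cones $\J(L_I,\sigma_t)$, a general point lies on a line joining a general point of $L_I$ to a general point of $\sigma_t$, and one more application of the linear lemma gives exact multiplicity $k_I+k_{\sigma_t}-d=k_{I,\sigma_t}$. The entire proof is a few lines.

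The slip in your outline is the assertion that $k_C$ (and hence $k_{I,\sigma_t}$) is Cremona-invariant. It is not: one computes $k_C^{\Cr}=k_C-c$ where $c=\sum_{i\le n+1}m_i-(n-1)d$ (see \eqref{formula k_C}). What \emph{is} true is that the multiplicity of a cycle in $\ls$ equals the multiplicity of its Cremona image in $\Cr(\ls)$, and it is this, together with the explicit identification of $\Cr(C)$ with a line, that makes the argument work. Your sharpness strategy via explicit products of divisorial cones is therefore unnecessary --- and would also be delicate, since verifying that the residual system after peeling off cones stays effective essentially requires the effectivity description of Theorem~\ref{effectivity lemma Pn n+3}, whose proof in turn invokes this very lemma.
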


\begin{proof}
Since all of the results used in this proof hold for arbitrary number of points $s$, 
it is enough to prove that statement for $s=n+3$ and for  the corresponding $C$.

If $t=0$ then $\J(L_I,\sigma_t)=L_I$ and the statement follows from 
Lemma \ref{linear base locus}.

Assume that $I=\emptyset$ and $t=1$. Then $\J(L_I,\sigma_t)=C$ is 
the rational normal curve through the $n+3$ points.
In \cite[Theorem 4.1]{carlini-catalisano}, the authors prove that performing 
the Cremona transformation 
based at the first $n+1$ base points of $\ls$, then $C$ is mapped
 to the line through the last two points
 $p_{n+2}$ and $p_{n+3}$, that we may denote by $L_{I(1)}$.  
Let $K_C$ be the multiplicity of containment of $C$ in $\ls$:
 one has $K_C\geq k_C$ by B\'ezout's Theorem.
Observe that $K_C$ is also the multiplicity of containment of the line  $L_{I(1)}$ in  $\Cr(\ls)$. 
We conclude by noticing that by the linear base locus lemma, this is given by 
$$
K_C=k_{I(1)}=m_{n+2}+m_{n+3}-\left(nd-\sum_{i=1}^{n+1}m_i\right)
=M-nd=k_C.
$$

Assume that $I=\emptyset$ and $t\ge2$, $2t-1<n$.
The above parts imply that any secant $(t-1)$-plane spanned by $t$ distinct points of $C$
 is contained in the base locus of $\ls$ with multiplicity exactly $k_{\sigma_{t}}.$
Moreover, since the multiplicity is semi-continuous, it follows that all limits of $t$-secant $(t-1)$-planes 
are contained in the base locus with multiplicity at least $k_{\sigma_{t}}$. Hence the secant variety $\sigma_t$ has 
multiplicity $k_{\sigma_{t}}$.

Finally, the case $I\neq\emptyset$, $t\ge 1$  follows from the above. Indeed every line $L$ in $\J(L_I, \sigma_t)$
 connecting a point of the vertex $L_I$ and a point of the base $\sigma_t$, is contained in the base locus of 
$\ls$ with multiplicity $k_I+k_{\sigma_{t}}-d$.
\end{proof}

\begin{remark}
 Notice  that the effectivity of $\ls$ implies the following inequality 
$k_C\le m_i\le d$, for all $i$. Indeed if $k_C>m_i$ for some $i$, then $\sum_{j\ne i}m_j>nd$, a contradiction by Theorem \ref{effectivity lemma Pn n+3}. 
This in particular implies  $k_{\sigma_{|I|+t}}\le k_{I,\sigma_t}\le k_I$. 
Moreover the obvious equality $k_{I,\sigma_t}=\sum_{i\in I}m_i-|I|d+k_{\sigma_t}$ and 
the effectivity condition $m_i\le d$ imply  $k_{I,\sigma_t}\le k_{\sigma_t}$.

Because of the containment relations
 $L_I, \sigma_t\subseteq \J(L_I,\sigma_t)\subseteq\sigma_{|I|+t}$, 
the above inequalities read as: if $L_I$ or $\sigma_t$ is not in the base locus of $\ls$,  
neither is $\J(L_I,\sigma_t)$ nor $\sigma_{|I|+t}$; if $\J(L_I,\sigma_t)$ is not contained in the base locus, neither is $\sigma_{|I|+t}$.
\end{remark}

\subsection{Geometric consequences of the base locus lemma}

An immediate consequence of the base locus lemma is a description of the
singularities of the secant variety, whenever this is a hypersurfaces. Indeed since
$\sigma_t$ is the unique element of the linear system $\ls_{2t,t+1}(t^{2t+3})$,
 one can compute  the
multiplicity along the lower order secant varieties.
\begin{corollary}\label{secant singular locus}
 Let $n=2t$ and $1\le \tau\le t$. Then $\sigma_t$ is singular with multiplicity 
$t-\tau+1$ on $\sigma_{\tau}\setminus\sigma_{\tau-1}$
\end{corollary}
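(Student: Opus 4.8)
The plan is to apply the base locus lemma (Lemma \ref{base locus lemma rnc}) to the specific linear system that cuts out $\sigma_t$, namely $\ls=\ls_{2t,t+1}(t^{2t+3})$, and read off the multiplicity of containment of the join cycle $\J(L_\emptyset,\sigma_\tau)=\sigma_\tau$ in its base locus. Since $\sigma_t$ is \emph{the} unique element of this linear system (as recalled in Section \ref{section divisorial cones}, and proved in Proposition \ref{multiples of secant}), the base locus of $\ls$ along $\sigma_\tau\setminus\sigma_{\tau-1}$ is exactly the multiplicity of $\sigma_t$ along that locus. So the whole computation reduces to evaluating $k_{\emptyset,\sigma_\tau}=k_{\sigma_\tau}$ for this system.

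First I would record the data of $\ls=\ls_{2t,t+1}(t^{2t+3})$: here $n=2t$, $d=t+1$, all $m_i=t$, and $s=n+3=2t+3$. Then $k_C=\sum_{i=1}^{n+3}m_i-nd=(2t+3)t-2t(t+1)=t$. Plugging into \eqref{multiplicity cone} with $I=\emptyset$ gives
\begin{equation*}
k_{\sigma_\tau}=\tau k_C-(\tau-1)d=\tau t-(\tau-1)(t+1)=t-\tau+1.
\end{equation*}
Next I would check the hypotheses of Lemma \ref{base locus lemma rnc}: the dimension condition $r_{\emptyset,\sigma_\tau}=2\tau-1\le n-1=2t-1$ holds precisely for $\tau\le t$, and $k_{\sigma_\tau}=t-\tau+1\ge 1$ holds precisely for $\tau\le t$ as well, so the lemma applies in exactly the stated range $1\le\tau\le t$. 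The lemma then says $\sigma_\tau$ is contained in the base locus of $\ls$ with \emph{exact} multiplicity $t-\tau+1$.

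Finally I would argue that "exact multiplicity of containment in the base locus of $\ls$'' translates to "multiplicity of $\sigma_t$ along $\sigma_\tau$'' because $\sigma_t$ is the only hypersurface in $\ls$; and the refinement from $\sigma_\tau$ to $\sigma_\tau\setminus\sigma_{\tau-1}$ comes for free, since along the smaller stratum $\sigma_\tau$ (an irreducible variety smooth away from $\sigma_{\tau-1}$) the multiplicity of a fixed hypersurface is constant, and the value on the generic point of $\sigma_\tau$ is the multiplicity of containment given by the lemma. I expect no serious obstacle here: the one point worth stating carefully is that the base locus lemma gives the generic multiplicity along $\sigma_\tau$ and that this equals $\mult_{\sigma_\tau\setminus\sigma_{\tau-1}}(\sigma_t)$, which is immediate once one invokes uniqueness of $\sigma_t$ in $\ls_{2t,t+1}(t^{2t+3})$ — the only mild subtlety being to make sure the uniqueness statement (Proposition \ref{multiples of secant}) is available, which it is by the organization of the paper. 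The case $\tau=t$ is the trivial consistency check that $\sigma_t$ has multiplicity $1$ along its own generic point.
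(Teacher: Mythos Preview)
Your proposal is correct and follows exactly the approach the paper intends: the corollary is stated as an immediate consequence of Lemma \ref{base locus lemma rnc} applied to $\ls_{2t,t+1}(t^{2t+3})$, together with the fact that $\sigma_t$ is its unique element (Proposition \ref{multiples of secant}). Your computation of $k_{\sigma_\tau}=t-\tau+1$ and verification of the hypotheses are precisely what the paper has in mind when it says ``one can compute the multiplicity along the lower order secant varieties.''
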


Another consequence of Lemma \ref{base locus lemma rnc} is the following result that in particular implies that Cremona reduced 
linear systems are movable.

\begin{corollary}\label{cremona reduction implies movable}
Let $\ls$ be an effective linear system with arbitrary number of points.
Assume that $\ls$ is Cremona reduced. 
Then $\ls$ does not contain any divisorial component of type $\J(I(n-2t-1),\sigma_t)$ in its base locus.
\end{corollary}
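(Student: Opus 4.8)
The plan is to derive the statement from the base locus lemma together with a short counting argument. Write $I=I(n-2t-1)$, so that $|I|=n-2t$ and hence $r_{I,\sigma_t}=|I|+2t-1=n-1$; thus $\J(L_I,\sigma_t)$ is a hypersurface, and by Lemma~\ref{base locus lemma rnc} it occurs as a fixed divisorial component of $\ls$ exactly when $k_{I,\sigma_t}\ge 1$. So it suffices to prove that $k_{I,\sigma_t}\le 0$ whenever $\ls$ is effective and Cremona reduced. I would first rewrite $k_{I,\sigma_t}$: combining \eqref{multiplicity cone} and \eqref{multiplicity rnc} with the identities $|I|+t-1=n-t-1$ and $(n-1)(t+1)=tn+n-t-1$, and distributing the summand $t\sum_{i=1}^{n+3}m_i$ according to whether an index belongs to $I$, one obtains
\[
k_{I,\sigma_t}=(t+1)\sum_{i\in I\cap\{1,\dots,n+3\}}m_i+\sum_{i\in I\setminus\{1,\dots,n+3\}}m_i+t\sum_{i\in\{1,\dots,n+3\}\setminus I}m_i-(n-1)(t+1)d .
\]

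The heart of the argument is to split the first three terms into $t+1$ blocks on which Cremona reducedness can be applied. Set $I_1:=I\cap\{1,\dots,n+3\}$, $a:=|I_1|$, $I_2:=I\setminus\{1,\dots,n+3\}$ and $R:=\{1,\dots,n+3\}\setminus I_1$, so that $|I_2|=n-2t-a$ and $|R|=2t+3+|I_2|$. Since $a+|I_2|=n-2t$ we have $2t+a\le n$, and this makes it possible to choose subsets $J_1,\dots,J_{t+1}\subseteq\{1,\dots,s\}$ with $|J_1|=\dots=|J_t|=n+1$ and $|J_{t+1}|=n$, such that every point of $I_1$ lies in all of them, every point of $R$ lies in exactly $t$ of them, and every point of $I_2$ lies in exactly one. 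For $t\ge 1$ this is explicit: put all of $I_1$ into every $J_k$, all of $I_2$ into $J_1$ alone, and partition $R$ into blocks of sizes $|I_2|+2,2,\dots,2,3$, letting $J_k$ consist of everything except the $k$-th block of $R$ (the block sizes are nonnegative and sum to $|R|$ precisely because $2t+a\le n$, and one checks that $|J_k|=n+1$ for $k\le t$ and $|J_{t+1}|=n$); the case $t=0$ is the degenerate instance with the single block $J_1=I$. By construction $\sum_{k=1}^{t+1}\sum_{j\in J_k}m_j$ equals the sum of the first three terms of the display, that is $k_{I,\sigma_t}+(n-1)(t+1)d$. Since $\ls$ is Cremona reduced, $\sum_{j\in J_k}m_j\le(n-1)d$ for $k\le t$; and $\sum_{j\in J_{t+1}}m_j\le(n-1)d$ as well, because $J_{t+1}$ has only $n$ elements and can be enlarged to an $(n+1)$-subset of $\{1,\dots,s\}$ (possible since $s\ge n+3>n$), all multiplicities being nonnegative. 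Summing over $k$ gives $k_{I,\sigma_t}+(n-1)(t+1)d\le(t+1)(n-1)d$, hence $k_{I,\sigma_t}\le 0$, which is what we wanted.

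The step I expect to be the real obstacle is the mismatch by one unit: the three positive terms have total weight $(n+1)(t+1)-1$, one short of $t+1$ full blocks of size $n+1$, so a partition into $t+1$ sets of size $n+1$ simply does not exist. This is exactly why one block must be allowed to have size $n$, and the price is that one has to verify separately that this short block still obeys the Cremona bound $\sum_{j\in J_{t+1}}m_j\le(n-1)d$ --- which is where the assumptions that $\ls$ is an honest linear system (nonnegative multiplicities) on at least $n+3$ points enter. A more conceptual route is to observe that $k_{I,\sigma_t}=-\langle \ls,\J(L_I,\sigma_t)\rangle$ for the Weyl-invariant bilinear form $\langle dH-\sum m_iE_i,\,d'H-\sum m_i'E_i\rangle:=(n-1)dd'-\sum m_im_i'$, and that by Remark~\ref{cones cremona} the divisor $\J(L_I,\sigma_t)$ lies in the Weyl orbit of an exceptional divisor, so that $k_{I,\sigma_t}$ is, up to sign, a coefficient of $w(\ls)$ for the corresponding Weyl element $w$; tracing that coefficient through the explicit chain of Cremona transformations of Remark~\ref{cones cremona}, however, brings one back to the same inequality, so I would present the counting argument above.
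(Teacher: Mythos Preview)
Your proof is correct and follows essentially the same approach as the paper's: both decompose $k_{I,\sigma_t}+(t+1)(n-1)d$ into $t$ sums of multiplicities over $(n+1)$-element index sets (handled directly by Cremona reducedness) plus one sum over an $n$-element set (handled by enlarging to $n+1$ elements using nonnegativity of the multiplicities). The paper writes this as the algebraic identity $k_{I,\sigma_t}=\sum_{j=1}^t\bigl(M-m_{i_j}-m_{i_{j+t}}-(n-1)d\bigr)+\bigl(\sum_{j=1}^{2t}m_{i_j}+\sum_{i\in I}m_i-(n-1)d\bigr)$ with $\{i_1,\dots,i_{2t}\}\subset\{1,\dots,n+3\}\setminus I$, which is a particular instance of your index-set construction $J_1,\dots,J_{t+1}$; your presentation is more combinatorial but the content is identical.
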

\begin{proof}
Write $n=2l+\epsilon$, with $\epsilon\in\{0,1\}$.
By Lemma \ref{base locus lemma rnc}, it is enough to prove that 
$k_{I(n-2t-1),\sigma_t}\le0$ for all $0\le t\le l+\epsilon$.

Since $\ls$ is Cremona reduced, the hyperplane spanned by the collection 
of points parametrized by $I(n-1)$ is not contained in the base locus, for any $I(n-1)$.
 Indeed if $I(n-1)\subset I(n)$, for some $I(n)$, we have 
$$
k_{I(n-1)}<\sum_{i\in I(n)}m_i-(n-1)d\le 0.
$$
This proves the statement for $t=0$.

Assume $1\le t\le l+\epsilon$. For any fixed index set $I:=I(n-2t-1)$ 
of cardinality $n-2t$, choose $2t$ distinct indices in its complement:
$\{{i_1},\dots,{i_{2t}}\}\subset \{1,\dots,n+3\}\setminus I$.
We have
\begin{align*}
k_{I,\sigma_t}&=tM+\sum_{i\in I}m_i-(t+1)(n-1)d\\
&=\sum_{j=1}^s\left(M-m_{i_j}-m_{i_{j+t}}-(n-1)d\right)
+\left(\sum_{j=1}^{2t}m_{i_j}+\sum_{i\in I}m_i-(n-1)d\right)<0
\end{align*}
The first $t$ terms are negative by assumption, the last is strictly negative because of  the hyperplane case $t=0$.
\end{proof} 

\begin{remark}
In Section \ref{movable cone} we will see that effective divisors in the blown-up 
$\PP^n$ at $n+3$ general points without fixed components 
of type $\J(I(n-2t-1),\sigma_t)$, namely those satisfying $k_{I(n-2t-1),\sigma_t}\le0$,
 are movable. Hence the cone of Cremona reduced effective divisors, that is polyhedral since 
defined by inequalities, is contained in the movable cone, that is in turn contained in the effective cone.
\end{remark}

\section{Effective and movable cones}\label{effectivity for secants}

In this section we will give necessary and sufficient conditions for 
 linear systems $\ls_{n,d}(m_1,\dots,m_{n+3})$ in $\PP^n$ with $n+3$ base points in general position to have at least one section.
This is equivalent to an
 \emph{effectivity theorem} for divisors on $X$,  the blown-up $\PP^n$ at $n+3$ 
general points, and
 provides a generalization of Theorem \ref{old effectivity} and Corollary \ref{old effectivity divisors}.

Throughout this section, we will use the same notation introduced in 
Section \ref{base locus lemma for secants}. Moreover we let 
$\NS^1(X)$ denote the Neron-Severi group of $X$ with coordinates 
$(d,m_1,\dots,m_{n+3})$
 corresponding to the hyperplane class and the 
classes of the exceptional divisors.

\begin{theorem}[Effectivity Theorem]\label{effectivity lemma Pn n+3}
\begin{enumerate}
\item[(I)] For $n\geq 2$, a linear system $\ls=\ls_{n,d}(m_1,\dots,m_{n+3})$ is non-empty if and only if 
\begin{align*}
(A_n)&   &  m_i\le &\ d,&\forall i=1,\dots,n+3,\\
(B_n)& &  M-m_i\le&\ nd, &\forall i=1,\dots,n+3,\\
(C_{n,t})& &   k_{I,\sigma_t}\le&\ 0, & \forall   |I|=n-2t+1, \ 1\le t\le l+\epsilon,
\end{align*}
where $n=2l+\epsilon$, $\epsilon\in\{0,1\}$.

\item[(II)]
The facets of the effective cone of divisors $\Eff_{\rr}(X)$ are given by the equalities in 
$(A_n)$, $(B_n)$, and $(C_{n,t})$, with $-1\le t\le l+\epsilon$.
\end{enumerate}
\end{theorem}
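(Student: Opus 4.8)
The plan is to prove both parts simultaneously by a Cremona-reduction argument, reducing everything to the classification of effective divisors with at most $n+2$ points (Corollary \ref{old effectivity divisors}) and to the base locus lemma (Lemma \ref{base locus lemma rnc}). First I would establish \emph{necessity} in part (I): if $\ls$ is non-empty then $(A_n)$ is immediate (a hypersurface of degree $d$ cannot have a point of multiplicity $>d$ unless it is everywhere singular there, forcing emptiness), $(B_n)$ follows by restricting to the rational normal curve $C$ through the $n+3$ points and applying B\'ezout together with the base locus lemma value $k_C=M-nd$ — equivalently, by performing the standard Cremona transformation based at $n+1$ of the points and using $(A_n)$ on the image, exactly as in the $t=0$ reasoning of Corollary \ref{cremona reduction implies movable}. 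For $(C_{n,t})$, suppose $k_{I,\sigma_t}\ge 1$ for some admissible $I$ with $|I|=n-2t+1$ and $1\le t\le l+\epsilon$; here $r_{I,\sigma_t}=|I|+2t-1=n$, so $\J(L_I,\sigma_t)$ fills $\PP^n$, and Lemma \ref{base locus lemma rnc} (or rather the estimate on multiplicities of containment that feeds its proof, together with the expression \eqref{expanded}) forces every section of $\ls$ to vanish on all of $\PP^n$ with positive multiplicity, so $\ls=\emptyset$.

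For \emph{sufficiency}, suppose $(A_n)$, $(B_n)$, $(C_{n,t})$ all hold; I want to produce a section. The key observation is that these inequalities cut out a polyhedral cone, and I would run a Cremona-reduction algorithm: order the multiplicities decreasingly and apply the standard Cremona transformation $\Cr$ based at the top $n+1$ points whenever $c=m_1+\cdots+m_{n+1}-(n-1)d>0$. By \eqref{Cremona preserves dim} this preserves $\dim(\ls)$, and one checks that it also preserves the inequalities $(A_n)$–$(C_{n,t})$ (this is the same bookkeeping as in Remark \ref{cones cremona} and the proof of Corollary \ref{cremona reduction implies movable}, run in reverse). The process terminates — here I would invoke finiteness of the Weyl group orbit, or a monotonicity argument on $d$ — producing a Cremona-reduced system $\ls'$ satisfying all the inequalities. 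For a Cremona-reduced system, $(C_{n,t})$ with $t\ge 1$ is automatic by the computation in Corollary \ref{cremona reduction implies movable}, so the binding constraints are $(A_n)$, $(B_n)$ and $c\le 0$; I then argue that a Cremona-reduced divisor satisfying $(A_n)$ and $(B_n)$ has $\sum_{i=1}^{n+3}m_i\le nd + \min(\dots)$ — i.e. it lands in the range of Theorem \ref{theorem n+3}/Theorem \ref{old effectivity}, perhaps after discarding one negative $m_i$ and treating it as an $\le n+2$-point system via Corollary \ref{old effectivity divisors} — hence is effective. Pulling back along the Cremona maps gives a section of the original $\ls$.

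Part (II) then follows formally. The inequalities $(A_n)$, $(B_n)$, $(C_{n,t})$ for $-1\le t\le l+\epsilon$ (now allowing $t=-1,0$, i.e. $d\ge 0$ and the linear constraints $k_{I(n+1)}\le 0$) are linear in $(d,m_1,\dots,m_{n+3})$, so they define a closed polyhedral cone $P$; by part (I), relaxing the positivity of the $m_i$, $P$ is exactly $\Eff_{\rr}(X)$ (one must check, as in the passage from Theorem \ref{old effectivity} to Corollary \ref{old effectivity divisors}, that dropping $m_i\ge 0$ introduces no new faces beyond $d\ge 0$ and the $n+1$-point linear inequalities, which is why the range of $t$ extends down to $-1$). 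To see that each listed inequality is actually a \emph{facet} and not a redundant supporting hyperplane, I would exhibit, for each one, an explicit divisor class lying on that hyperplane and strictly inside all the others — for $(C_{n,t})$ the natural candidate is the join divisor $\J(L_{I(n-2t)},\sigma_t)=\ls_{n,t+1}((t+1)^{n-2t},t^{2t+3})$ of \eqref{cones as divisors}, which by \cite[Theorem 2.7]{CT} is an extremal ray, and dually the corresponding hyperplane is a facet; for $(A_n)$ and $(B_n)$ one uses the obvious exceptional-type classes.

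\textbf{Main obstacle.} The delicate point is the sufficiency direction: showing that the Cremona-reduction algorithm terminates and that a terminal (Cremona-reduced) system satisfying $(A_n)$ and $(B_n)$ genuinely falls under Theorem \ref{old effectivity} — in other words that for Cremona-reduced classes the non-linear constraints $(C_{n,t})$ and the condition $c\le 0$ together force the numerical inequality \eqref{EffectivityCondition} (or its $\le n+2$-point analogue). Getting the combinatorics of "$\min(n-s(d),\,s-n-2)$" to match exactly what Cremona-reducedness plus $(B_n)$ provides, and handling the boundary cases where some $m_i$ becomes zero or negative, is where the real work lies; the rest is bookkeeping that parallels the already-established Corollary \ref{cremona reduction implies movable}.
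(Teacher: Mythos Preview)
Your overall architecture is right, but the sufficiency direction has a genuine gap that you flag as the ``main obstacle'' and that does \emph{not} reduce to bookkeeping. The claim that a Cremona-reduced class satisfying $(A_n)$ and $(B_n)$ lands in the range of Theorem~\ref{old effectivity} is false. Take $\ls=\ls_{4,10}(6^7)$: here $c=5\cdot 6-3\cdot 10=0$, so $\ls$ is already Cremona reduced; one checks that $(A_4)$, $(B_4)$, $(C_{4,1})$, $(C_{4,2})$ all hold; yet $k_C=42-40=2$, so the bound \eqref{EffectivityCondition} reads $42\le 41$ and fails. Since the right-hand side of \eqref{EffectivityCondition} is at most $nd+1$ when $s=n+3$, no combinatorial juggling of ``$\min(n-s(d),s-n-2)$'' will close this gap: Cremona-reduced systems with $k_C\ge 2$ simply lie outside the reach of Theorem~\ref{old effectivity}.

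The missing ingredient, which the paper supplies, is a second reduction orthogonal to Cremona. When $\ls$ is Cremona reduced with $k_C\ge 1$ and all $m_i\le d-1$, one subtracts the cone divisor $\J(L_I,C)\in\ls_{n,2}(2^{n-2},1^5)$ for $I=\{1,\dots,n-2\}$ and passes to the residual $\ls'$; one then verifies that $\ls'$ still satisfies $(A_n)$--$(C_{n,t})$ and has $k'_C=k_C-1$. Iterating this step, interleaved with further Cremona reductions whenever the residual ceases to be Cremona reduced, drives $k_C$ down to zero, where Theorem~\ref{old effectivity} finally applies. The boundary case $m_1=d$ is handled separately by cone reduction to $\PP^{n-1}$, which is why the paper's argument is an induction on $n$ with base case $n=2$ treated by hand. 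Without the cone-peeling step your algorithm simply stalls on examples like the one above.

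A smaller issue concerns your necessity argument for $(C_{n,t})$: you invoke Lemma~\ref{base locus lemma rnc} (or its proof) for a join with $r_{I,\sigma_t}=n$, which is outside that lemma's hypothesis $r_{I,\sigma_t}\le n-1$. The line-by-line argument behind the lemma needs the generic points of $L_I$ and of $\sigma_t$ to already lie in the base locus with positive multiplicity, i.e.\ $k_I\ge 1$ and $k_{\sigma_t}\ge 1$, and neither is guaranteed by $k_{I,\sigma_t}\ge 1$ alone. The paper instead drops one index $j\in I$ so that $\J(L_{I\setminus\{j\}},\sigma_t)$ is an honest hypersurface, and splits on whether $k_{I\setminus\{j\},\sigma_t}\le 0$ (finish via $(A_n)$, since $k_{I,\sigma_t}=k_{I\setminus\{j\},\sigma_t}+(m_j-d)$) or $k_{I\setminus\{j\},\sigma_t}\ge 1$ (peel off that fixed hypersurface and read $(C_{n,t})$ off from $d'\ge m'_j$ on the residual).
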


\begin{remark} 
In Section \ref{base locus lemma for secants} the numbers $k_{I,\sigma_t}$ \eqref{multiplicity cone} appeared as the
 multiplicities of containment 
in the base locus, of cycles of codimension at least one, namely for $1\le t\le l+\epsilon$ and $0\le |I|\le n-2t-1$.
In this section, the numbers $k_{I(n-2t),\sigma_t}$ appearing in $(C_{n,t})$ (Theorem \ref{effectivity lemma Pn n+3}) 
are formal generalizations of the above to the zero codimensional case: $\J(I(n-2t),\sigma_t)=\PP^n$.
In analogy with the cases $s= n+1, n+2$ points, where the condition $\sum_{i=1}^sm_i\le nd$ 
corresponds to asking that the linear span of all of the points $-$ that is the whole space $\PP^n$ $-$ is not in the base locus
(see \cite[Theorem 1.6]{DumPos}),
here we ask that no $n$-dimensional \emph{virtual} cycle $\J(I(n-2t),\sigma_t)$ is in the base locus of linear systems with $n+3$ points. 
\end{remark}

The proof of Theorem \ref{effectivity lemma Pn n+3} is by induction on $n$. For this reason we think it is convenient to treat the initial case, $n=2$, separately. Here the conditions of part \emph{(I)} read as follows.

\begin{align*}
(A_2)&   &  m_i\le &\ d,&\forall i=1,\dots,5,\\
(B_2)& &  M-m_i\le& \ 2d, &\forall i=1,\dots,5,\\
(C_{2,1})& &   M+m_i\le&\ 3d, & \forall i=1,\dots,5,
\end{align*}

\begin{proof}[Proof of Theorem \ref{effectivity lemma Pn n+3}, part (I), case $n=2$]
Without loss of generality we may assume $m_1\ge m_2\ge\cdots\ge m_5\ge1$.
%If $m_5=0$, the set of conditions $(B_2)$
%becomes just $\sum_{j=1}^4m_j\le 2d$ and $(C_{2,1})$  is redundant. In this case the effective lemma was proved in 
%\cite[Lemma 4.8]{CDDGP} and \cite[Lemma 2.2]{BraDumPos1}.
%We will assume $m_5\ge1$.
It is enough to prove that $\ls$ is non-empty if and only if
$$
m_1\le d, \quad 
m_1+m_2+m_3+m_4\le 2d, \quad 
2m_1+m_2+m_3+m_4+m_5\le 3d.
$$

\medskip
If $\ls$ is non-empty, then obviously $m_1\le d$. Moreover since  $\ls_{2,d}(m_1,\dots,m_4)$ is non-empty, then $m_1+m_2+m_3+m_4\le 2d$.

To prove the third inequality assume first that $k_C=\sum_{j=1}m_j-2d\le0$. Then 
$2m_1+m_2+m_3+m_4+m_5-3d= k_C+m_1-d\le0$. If $k_C\ge 1$, then 
by Lemma \ref{base locus lemma rnc}, the conic $C$ is a fixed component of $\ls$ 
and the residual part has degree $d':=d-2k_C=5d-2\sum_{j=1}^5m_j$ 
and multiplicities $m'_i=2d-\sum_{j=1}^5m_j+m_i$ at the $5$ points. Notice that $m'_i\ge0$ by the second inequality.
Effectivity implies $d'\ge m'_1$ and this is equivalent
to the third inequality.

\medskip
We now prove the other implication.
If $k_C=\sum_{j=1}^5m_j-2d\le0$ then $\ls$ is non-empty by Theorem \ref{old effectivity}. 
Assume $k_C\ge1$.
By Lemma \ref{base locus lemma rnc}  the conic through the five points 
is contained in the base locus with multiplicity $k_C$. 
Notice that $m_1+m_2+m_3+m_4\le 2d$ implies $k_C\le m_5$. 
The residual is $\ls'$ with $d'=d-2k_C=5d-2\sum_{j=1}^5m_j$ and $m'_i=m_i-k_C=2d-\sum_{j=1}^5m_j+m_i\ge0$, for all $i=1,\dots,5$.
Obviously $\sum_{j=1}^5m'_j-2d'=0$. We claim $d'\ge m'_i$, for all $i=1,\dots,5$. Hence $\ls'$ is effective. 
To prove the claim for $i=1$, notice that $d'-m'_1=5d-2\sum_{j=1}^5m_j -2d+\sum_{j=1}^5m_j-m_1=3d-\sum_{j=1}^5m_j-m_1\ge0$.
\end{proof}

We now complete the proof of the first part of effectivity theorem for $n\ge 3$.

\begin{proof}[Proof of Theorem \ref{effectivity lemma Pn n+3}, part (I), $n\ge3$]
Without loss of generality, we may reorder the points so that $m_1\ge\cdots\ge m_{n+3}$.
If $m_{n+3}=0$, the set of conditions $(B_n)$ 
becomes just $\sum_{j=1}^{n+2}m_j\le nd$ and the third set of conditions, $(C_{n,t})$ is redundant. 
In this case the result was proved in \cite[Lemma 2.2]{BraDumPos1}, see Theorem \ref{old effectivity}.
Hence we will assume $m_{n+3}\ge1$.

\medskip\medskip
\emph{``Only if'' implication:}

If $\ls$ is effective then $(A_{n})$ and $(B_{n})$ trivially hold. 

The {expanded} expressions of condition $(C_{n,t})$ is 
$$
k_{I,\sigma_t}=tM+\sum_{i\in I}m_i-((t+1)n-t)d\le 0, 
$$ 
for all $1\le t\le \left\lfloor\frac{n-1}{2}\right\rfloor=l+\epsilon$  and all multi-index $I=I(n-2t)$, see \eqref{expanded}.
Fix such $t$ and $I=I(n-2t)$. Take any $j\in I(n-2t)$ and denote by $I\setminus\{j\}$ its complement in $I$.
Write
 $$k_{I\setminus\{j\},\sigma_t}=tM+\sum_{i\in I\setminus\{j\}}m_i-((t+1)n-(t+1))d.$$
 In order to prove the inequality $(C_{n,t})$, we consider the following cases.

\medskip
Case (1). Assume that $k_{I\setminus\{j\},\sigma_t}\le0$.
Since $k_{I,\sigma_t}=k_{I\setminus\{j\},\sigma_t}+(m_j-d)$, we conclude by $(A_n)$.

\medskip
Case (2).  Assume that $k_{I\setminus\{j\},\sigma_t}\ge1$. By Lemma \ref{base locus lemma rnc}, 
$k_{I\setminus\{j\},\sigma_t}$ is the multiplicity of containment of the cone 
$\J(I\setminus\{j\},\sigma_t)$, that is the hypersurface $\ls_{n,t+1}((t+1)^{n-2t},t^{2t+3})$, see \eqref{cones as divisors}.
 The residual of $\ls$ after its removal is still effective by assumption. 
Let us denote by $d'$ and by $m'_j$ the degree and the multiplicity at the point 
$p_j$ of the residual, that is $d'=d-(t+1)k_{I\setminus\{j\},\sigma_t}$  and 
 $m'_j=m_j-tk_{I\setminus\{j\},\sigma_t}$. Effectivity implies that 
$d'\ge m'_j\ge0$. We conclude by noticing that $d'-m'_j\ge 0$  is equivalent to 
$k_{I,\sigma_t}\le0$.

\medskip\medskip

\emph{``If'' implication:}

The proof is by induction on $n$, with initial step 
the case $n=2$ for which the statement is already proved to hold.
Assume the statement true for $n-1$.

We will construct recursively an element that belongs to $\ls$, hence
proving non-emptiness. We will treat the following  cases and subcases separately.
\begin{itemize}
\item[(0)] $k_C\le 0$.
\item[(1)] $k_C\ge 1$ and $m_1=d$.
\item[(2)] $k_C\ge 1$ and $m_1\le d-1$,
\begin{itemize}
\item[(2.a)] $\ls$ is Cremona reduced,
\item[(2.b)] $\ls$ is not Cremona reduced.
\end{itemize}
\end{itemize}

\medskip
Case (0). In this case $\ls$ is effective by Theorem \ref{old effectivity}. 

\medskip
Case (1). 
Notice that the elements of $\ls$, cones with 
vertex at $p_1$, are in bijection with  the elements of a linear system 
$\ls'=\ls_{n-1,d}(m_2,\dots,m_{n+3})$.  
One can check easily that $\ls'$ satisfies conditions $(A_{n-1})$, $(B_{n-1})$, being those implied by 
 $(A_{n})$, $(B_{n})$  respectively. Moreover
for $1\le t\le l$ and any index set 
 $I=I(n-2t)$ such that  $1\in I$, 
condition $(C_{n,t})$ implies condition
$(C_{n-1,t})$, for the index set $I'=I\setminus\{1\}=I(n-1-2t)$. Indeed we have
\begin{align*}
k_{I',\sigma_t}&=tM'+\sum_{i\in I'}m'_i-((t+1)(n-1)-t)d'\\
&=tM+\sum_{i\in I}m_i-(t+1)m_1-
((t+1)n-t)d+(t+1)d\\
&=k_{I,\sigma_t}\le0.
\end{align*}
Since $\ls'$ is effective, then $\ls$ is.

\medskip
Case (2.a).
Notice that in this case $m_{n+3}\ge2$. 
Set  $I:=\{1,\dots, n-2\}$ and consider the cone $\J(I,C)$ over the rational normal curve $C$
with vertex the linear subspaces $L_I$ spanned by the first $n-2$ points. 
As in \eqref{cones as divisors}, 
$\J(I,C)$ can be interpreted as the fixed divisor $\ls_{n,2}(2^{n-2},1^5)$. 
Let us denote by $\ls'$ the kernel of the restriction map $\ls\to \ls|_{\J(I,C)}$. We can write
$$\ls'=\ls_{n,d'}(m'_1,\dots,m'_{n+3}):=\ls_{n,d-2}(m_1-2,\dots,m_{n-2}-2,m_{n-1}-1,\dots, m_{n+3}-1),$$
 and the inequality $\dim(\ls)\ge\dim(\ls')$ is satisfied.

Notice that if $m_{n-2}=2$, i.e. $m'_{n-2}=0$, then $\ls'$ is based on at most $n+2$ points. 
In this case we have 
\begin{align*}
\sum_{i=1}^{n+3}m'_i-nd'&=(M-2s-1)-n(d-2)\\
& =(M-m_1-m_{n-2}-(n-1)d)+(m_{n-2}+m_1-d-1)\le 0.
\end{align*}
The inequality follows
by the fact that $\ls$ is Cremona reduced and that $m_1\le d-1$.
One concludes by noticing that  $\ls'$ falls into case (0).

Otherwise, if $m_{n-2}\ge3$, since we also have $m_{n+3}\ge2$, i.e. $m'_{n+3}\ge1$, then $\ls'$ is based 
on $n+3$ points. We claim that  such a $\ls'$ satisfies conditions
 $(A_n)$, $(B_n)$, $(C_{n,t})$. 
Moreover $k'_C=k_C-1$, namely the multiplicity of containment of $C$ in the base locus
of $\ls'$ has decreased by one.
If $k'_C=0$ we conclude by case (0), otherwise we proceed with cases (1) or (2).

We are now left with showing the claim.
One can easily check that the first two conditions are satisfies, because of 
the assumption $m_i\le d-1$.
In order to prove that the third set of conditions, $(C_{n,t})$, is also satisfied 
for any set $I=I(n-2t)$, 
$n\ge 2t-1$, notice that 
$$\sum_{i\in I} m'_i \le \sum_{i\in I} m_i-(n-2t+1+f),$$
where $f$ is the cardinality of the index set $I\cap\{1,\dots,n-2\}$.
From this,  it follows that
$$
tM'+\sum_{i\in I} m'_i-((t+1)n-t)d'\le tM+\sum_{i\in I} m_i-((t+1)n-t)d+(n-t-f-1).
$$
Now, choose $2t$ distinct indices 
$\{{i_1},\dots,{i_{2t}}\}\subset \{1,\dots,n+3\}\setminus I$;
the right hand side of the above expression equals
$$
\sum_{j=1}^t\left(M-m_{i_j}-m_{i_{j+t}}-(n-1)d\right)
+\alpha,
$$
where
$$
\alpha:= \left(\sum_{j=1}^{2t}m_{i_i}+\sum_{i\in I}m_i-nd\right)+(n-t-f-1).
$$
Here we introduce the integer $\alpha$ for the sake of simplicity as we will treat different cases in what follows. 
Notice that  because of the assumption that $\ls$ is Cremona reduced, in order to conclude 
it is enough to prove that $\alpha\le 0$.

Assume $d\ge n-t-1$. We have 
$$
\alpha
=\left(\sum_{j=1}^{2t}m_{i_j}+\sum_{i\in I}m_i-(n-1)d\right)
+(n-t-f-1-d)\le0,
$$
where the inequality follows from the fact that $\ls$ is Cremona reduced.

If $d\le n-t-2$, using $m_i\le d-1$ we obtain
$$
 \alpha
\le (n+1)(d-1)-nd+(n-t-f-1)=n-2t-4-f\le0,
$$
where the last inequality {is implied} by the fact that $f\ge\min\{0,n-2t-4\}$. 

\medskip
Case (2.b). Assume that $\ls$ is not Cremona reduced, namely that 
$$c:=\sum_{i=1}^{n+1}m_i-(n-1)d\ge1$$
and write
$$\ls':=\Cr(\ls)=\ls_{n,d'}(m'_1,\dots,m'_{n+3}).$$
We have  $\dim(\ls)=\dim(\ls')$ by \eqref{Cremona preserves dim}.
We claim that $\ls'$
satisfies conditions $(A_n)$, $(B_n)$, $(C_{n,t})$.
Hence we can reiterate the entire procedure for $\ls'$, hence reducing the proof of the effectivity
of $\ls$ to the proof of the effectivity of its Cremona transform $\ls'$.

We now prove the claim. 
We refer to conditions  $(A_n)$,  $(B_n)$  and  $(C_{n,t})$ for $\ls'$ 
as $(A_n)'$,  $(B_n)'$  and  $(C_{n,t})'$.

 Notice that  $m'_i\le d'$ if and only if $m_i\le d$, for all $i\le n+1$. Moreover $m'_i\le d'$
 is equivalent to $(B_n)$, for $i=n+2,n+3$. 

One can easily check that $(B_n)$ implies $(B_n)'$.

 We now prove that $(C_{n,t})'$ is satisfied for any $t$ and any index set $I=I(n-2t)$, with $n\ge 2t-1$.
The expanded expression of condition $(C_{n,t})'$ is 
$$
tM'+\sum_{i\in I} m'_i-((t+1)n-t)d'\le0.
$$
Assume that $I\subset\{1,\dots,n+1\}$. The left-hand side of the above expression equals
$$
tM+\sum_{i\in I} m_i-((t+1)n-t)d-c
$$
that is negative because $c\ge1$ and  $(C_{n,t})$ is satisfied.

Assume  that $| I\setminus\{1,\dots,n+1\}|=1$, that {occurs} only if $n\ge 2t$. 
One can easily verify that $(C_{n,t})'$ is equivalent to  $(C_{n,t})$.

Assume that $| I\setminus\{1,\dots,n+1\}|=2$, that occurs only when $n\ge 2t+1$.
The left-hand side of the expanded expression of condition $(C_{n,t})'$ equals
$$
tM+\sum_{i\in I} m_i-((t+1)n-t)d+c
=(t+1)M+\sum_{i\in I(n-2t-2)} m_i-((t+2)n-(t+1))d
$$
and this is bounded above by zero by $(C_{n,t+1})$.
\end{proof}

\begin{proof}[Proof of Theorem \ref{effectivity lemma Pn n+3}, part (II)]
Notice that the arithmetic condition $(C_{n,-1})$ corresponds to  $d\geq 0$ and $(C_{n,0})$ corresponds to  $M-m_i-m_j\leq nd$. 
When multiplicities $m_i$ are positive, condition $(C_{n,0})$ is redundant and the statement was proved in part \emph{(I)}. 

We assume now $m_1\ge \cdots\ge m_{n+3}$ and  $m_{n+3}<0$. 
Then conditions $(C_{n,-1})$ and $(C_{n,0})$ imply $(C_{n,t})$ for $t\geq 1$. 
Moreover the non-emptiness of $\ls$ is equivalent to the non-emptiness of $\ls_{n,d}(m_1,\dots,m_{n+2})$.
This holds because every effective divisor $D$ in $\ls$ decomposes as $D=-m_{n+3}E_{n+3}+(D+m_{n+3}E_{n+3}).$ The statement follows now from the description of the effective cone of divisors on the blown-up $\PP^n$ at $s\leq n+2$ points, that is given  by $(A_n)$, $(B_n)$, $(C_{n,-1})$ and $(C_{n,0})$, see Corollary \ref{old effectivity divisors}.
%%We conclude that the
%%proof of Theorem \ref {effectivity lemma Pn n+3} part (II) follows from part (I) when multiplicities $m_i$ are positive.
\end{proof}

\subsection{Movable Cone of Divisors}\label{movable cone}
{\emph{Mori dream spaces}} were introduced by Hu and Keel (\cite{HuKeel}, see also \cite{ADHL}), we give now an alternative definition. 
Let $X$ be a normal $\QQ$-factorial variety whose Picard group, $\Pic(X)$, is a lattice. Define the \emph{Cox ring} of X as
$$\Cox(X):=\bigoplus_{D\in \Pic(X)} H^0(X, D)$$
with multiplicative structure defined by a choice of divisors whose classes form a 
basis for the Picard group $\Pic(X)$.
We say that $X$ is a Mori dream space if the Cox ring, $\Cox(X)$, is finitely generated.

We define the \emph{movable cone} of a variety, $\Mov(X)$, to be the cone generated by 
divisors without divisorial base locus. 
For a Mori dream space, the movable cone and the effective cone of divisors are polyhedral. Moreover, 
the movable cone decomposes into disjoint union of \emph{nef chambers}  that are the nef cones of all small 
$\QQ$-factorial modifications. 

Let $X$ now denote   the blow-up of the projective space $\PP^n$
at $s\leq n+3$ points in general position. We first recall that $X$ is a 
Mori dream space \cite{CT} (see also  \cite{Mukai2}). Moreover results of \cite{CDDGP,CT} imply 
that the effective cone $\Eff_{\rr}(X)$ is generated as a cone and semigroup by divisors in the
 \emph{Weyl orbit} $W\cdot E_i$. Here, $W$ represents the \emph{Weyl group} of $X$ and $W\cdot E_i$
 the orbit with respect to 
 its action on an exceptional divisor. Recall that every element of $W$ corresponds to a birational 
map of $\PP^{n}$ lying in the group generated by 
projective automorphisms and standard  Cremona transformations of $\PP^{n}$.
Also, $$\Pic(X)=\langle H, E_1, E_2, \ldots, E_{s}\rangle.$$ Following \cite{Mukai} we introduce a symmetric bilinear on
 $\Pic(X)$ acting on its generators as:
\begin{equation}\label{bilinear form}
\langle E_{i}, E_{j}\rangle=-\delta_{i,j}, \ \langle E_{i}, H\rangle=0, \ \langle H,
H\rangle= n-1.
\end{equation}

Let $\Eff_{\rr}(X)^\vee$ denote the dual cone of the cone of effective divisors with real coefficients. Namely, the dual cone 
$\Eff_{\rr}(X)^\vee$ consists of divisors $D$ such that $\langle D,F \rangle \geq 0$ with all $F\in \Eff_{\rr}(X).$
One can define  the \emph{degree} of a divisor $D\in \Pic(X)$ as follows (see \cite{CT}):
$$\deg(D):=\frac{1}{n-1}\langle D, -K_{X} \rangle,$$
where $K_{X}$ denotes the canonical divisor of the blown-up projective space, $X$.
For $s\leq n+3$ the movable cone can be described as the intersection 
between the cone of effective divisors with real coefficients and its dual
\begin{equation}\label{movable as intersection}
\Mov(X) = \Eff_{\rr}(X) \cap \Eff_{\rr}(X)^\vee,
\end{equation}
 (see \cite[Theorem 4.7]{CDDGP}). 

The facets of the effective cone $\Eff_{\rr}(X)$ for $s\le n+2$ 
are given in Corollary \ref{old effectivity divisors}. 
As an application of this result and using \eqref{movable as intersection}, one can easily describe 
the facets of 
the movable cone $\Mov(X)$ for $s\leq n+2$ 
(see \cite{CDDGP}). 

We will extend this description to the case with $s=n+3$ points.
The facets of the effective cone $\Eff_{\rr}(X)$ of the blown-up $\PP^n$ at
$n+3$ points in general position are computed in Theorem
\ref{effectivity lemma Pn n+3}.
Moreover the generators of the effective cone  are described in \cite{CT} by 
the classes of divisors in the set
\begin{equation}\label{generating set A}
\mathcal{A}=\left\{(t+1)H-(t+1)\sum_{i\in I}E_i-t\sum_{i\notin I}E_i:  |I|=n-2t, 
-1\le t\le l+\epsilon\right\},
\end{equation}
where $n=2l+\epsilon$, $\epsilon\in\{0,1\}$. 
Notice that the divisors in $\mathcal{A}$ are the only divisors on $X$ of degree 1 and are the strict 
transforms of the one-section linear systems 
 described  in 
\eqref{cones as divisors};
these are precisely the divisors
 that will appear in Conjecture \ref{conjecture}.

We can now describe the movable cone of divisors on the blown-up $\PP^n$
 at the collection of $n+3$ points. 
They are the effective divisors on $X$ for which the corresponding linear system
has no fixed divisorial component of type $\mathcal{A}$.

\begin{theorem}\label{thm movable cone}
 For $n\geq 2$, letting $(d, m_1, \ldots, m_{n+3})$ 
 be the coordinates of the Neron-Severi group, 
$\NS^1(X)$, then the movable cone $\Mov(X)$ is generated by the inequalities $(A_n)$ and 
$(B_n)$,  of Theorem \ref{effectivity lemma Pn n+3} and
\begin{align*}
(D_{n,t})\quad\quad \quad \quad
k_{I,\sigma_t}\le0,  \quad\quad\quad \forall |I|=n-2t, \ -1\le t\le l+\epsilon.
\end{align*}
\end{theorem}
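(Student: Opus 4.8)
The plan is to use the characterization of the movable cone for Mori dream spaces given in \eqref{movable as intersection}, namely $\Mov(X) = \Eff_{\rr}(X) \cap \Eff_{\rr}(X)^\vee$, and to combine it with the description of $\Eff_{\rr}(X)$ from Theorem \ref{effectivity lemma Pn n+3}, part (II), and the explicit set of generators $\mathcal{A}$ from \eqref{generating set A}. Since for a Mori dream space the movable cone consists exactly of the effective divisors whose linear systems have no fixed divisorial component, and since by \cite{CT} every extremal effective divisor is in the Weyl orbit of an exceptional divisor $-$ equivalently, the generators of $\Eff_{\rr}(X)$ are precisely the elements of $\mathcal{A}$, which are the strict transforms of the divisorial cones $\J(L_I,\sigma_t)$ of \eqref{cones as divisors} $-$ the only possible fixed divisorial components of an effective linear system on $X$ are those of type $\mathcal{A}$. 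Thus $\Mov(X)$ is cut out inside $\Eff_{\rr}(X)$ by requiring that none of the $\mathcal{A}$-divisors be a fixed component, which by Lemma \ref{base locus lemma rnc} and the remark following Corollary \ref{cremona reduction implies movable} translates into the inequalities $k_{I,\sigma_t}\le 0$ for $|I|=n-2t$, $1\le t\le l+\epsilon$, together with the boundary cases $t=-1$ (giving $d\ge 0$, i.e.\ the class $H$ is not a fixed component) and $t=0$ (giving $M-m_i-m_j\le nd$, i.e.\ no hyperplane $L_{I(n-1)}$ is a fixed component). These are exactly the conditions $(D_{n,t})$ for $-1\le t\le l+\epsilon$.

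First I would recall that Corollary \ref{cremona reduction implies movable} together with the subsequent remark already show one inclusion: the polyhedral cone $\mathcal{M}$ defined by $(A_n)$, $(B_n)$ and $(D_{n,t})$ for all $-1\le t\le l+\epsilon$ is contained in the movable cone, because a Cremona-reduced effective divisor has no divisorial fixed component of type $\J(I(n-2t-1),\sigma_t)$, and the conditions $(D_{n,t})$ are precisely the Cremona-reduced conditions for the $\mathcal{A}$-divisors (one checks that $k_{I(n-2t),\sigma_t}\le 0$ for all these $t$ is equivalent to $\ls$ being Cremona reduced, using that applying $\Cr$ to the top $n+1$ multiplicities $t+1$ times sends the divisor in $\mathcal{A}$ to an exceptional class, as in Remark \ref{cones cremona}). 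Conversely, for the reverse inclusion I would argue that any effective divisor $D$ not in $\mathcal{M}$ violates some $(D_{n,t})$, hence has $k_{I,\sigma_t}\ge 1$ for some $|I|=n-2t$; then by Lemma \ref{base locus lemma rnc} (in the divisorial case, $r_{I,\sigma_t}=n-1$) the cone $\J(L_I,\sigma_t)$, which is the $\mathcal{A}$-divisor $(t+1)H-(t+1)\sum_{i\in I}E_i-t\sum_{i\notin I}E_i$, is a fixed divisorial component of $D$, so $D\notin\Mov(X)$. The boundary cases $t=-1,0$ are handled using the linear base locus lemma, Lemma \ref{linear base locus}, applied to the hyperplane class and the hyperplanes $L_{I(n-1)}$.

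The main obstacle I anticipate is the bookkeeping needed to verify that the inequalities $(D_{n,t})$, defined via the formula $k_{I,\sigma_t}=\sum_{i\in I}m_i+tk_C-(|I|+t-1)d$, genuinely coincide with the ``no $\mathcal{A}$-fixed-component'' conditions in the presence of the convex-geometric passage through $\Eff_{\rr}(X)^\vee$: one has to make sure that intersecting with the dual cone does not impose any additional facets beyond those coming from ``remove the $\mathcal{A}$-divisor and stay effective''. This amounts to checking that for each generator $A\in\mathcal{A}$ the condition $\langle D, A\rangle\ge 0$ is equivalent, modulo $(A_n)$ and $(B_n)$, to the corresponding $k_{I,\sigma_t}\le 0$. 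Using the bilinear form \eqref{bilinear form} one computes $\langle D, A\rangle$ for $D=(d,m_1,\dots,m_{n+3})$ and $A=(t+1,(t+1)_{i\in I},t_{i\notin I})$ directly: $\langle D,A\rangle = (n-1)(t+1)d - (t+1)\sum_{i\in I}m_i - t\sum_{i\notin I}m_i$, and a short manipulation shows this is a positive multiple of $-k_{I,\sigma_t}$ (using $\sum_{i\notin I}m_i = M-\sum_{i\in I}m_i$ and the definition of $k_C$), so $\langle D,A\rangle\ge 0 \iff k_{I,\sigma_t}\le 0$, which is $(D_{n,t})$. Together with \eqref{movable as intersection} and Theorem \ref{effectivity lemma Pn n+3}(II), this gives $\Mov(X) = \Eff_{\rr}(X)\cap\bigcap_{A\in\mathcal{A}}\{\langle\,\cdot\,,A\rangle\ge 0\}$, which is exactly the cone cut out by $(A_n)$, $(B_n)$ and $(D_{n,t})$ for $-1\le t\le l+\epsilon$, completing the proof.
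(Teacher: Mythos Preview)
Your proposal is essentially correct and follows the same route as the paper: use $\Mov(X)=\Eff_{\rr}(X)\cap\Eff_{\rr}(X)^\vee$ from \eqref{movable as intersection}, identify $\Eff_{\rr}(X)^\vee$ as the set of $D$ with $\langle D,A\rangle\ge0$ for every generator $A\in\mathcal{A}$, and verify by the bilinear form \eqref{bilinear form} that $\langle D,A\rangle=-k_{I,\sigma_t}$ (in fact equality, not merely a positive multiple), so these conditions are exactly $(D_{n,t})$. Two small points: first, you should state explicitly that the inequalities $(C_{n,t})$ of Theorem \ref{effectivity lemma Pn n+3} become redundant once $(A_n)$ and $(D_{n,t})$ are imposed (for $|I|=n-2t+1$ remove any index $j$ to get $I'$ with $|I'|=n-2t$, then $k_{I,\sigma_t}=k_{I',\sigma_t}+(m_j-d)\le0$); the paper notes this but you do not. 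Second, your middle paragraph attempting a direct fixed-component argument via Corollary \ref{cremona reduction implies movable} is unnecessary and partly inaccurate: being Cremona reduced is not equivalent to the full set of conditions $(D_{n,t})$, so that route does not by itself yield $\mathcal{M}\subseteq\Mov(X)$. Since your final paragraph already gives a complete dual-cone argument, you can simply drop the Cremona discussion.
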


\begin{proof}
It follows from Theorem \ref{effectivity lemma Pn n+3}, \cite[Theorem 4.7]{CDDGP} and 
\cite[Theorem 2.7]{CT}. 
 Indeed a divisor in $\NS^1(X)$ of the form 
$$
D=dH-\sum_{i=1}^{n+3}m_i E_i
$$
with $d,m_i\ge 0$
lies in $\Eff_{\rr}(X)^\vee$ if and only if it has non-negative intersection 
number \eqref{bilinear form} with all elements of the generating
 set $\mathcal{A}$ described in
\eqref{generating set A}. We leave it to the reader to
verify that these conditions are equivalent to
the set of inequalities $(D_{n,t})$. Notice also that the conditions $(C_{n,t})$ in Theorem \ref{effectivity lemma Pn n+3} are redundant. One concludes the proof by using  \eqref{movable as intersection}.  
\end{proof}

\subsection{Faces of the movable cone and contractions.}
From Mori theory it follows that the faces of the movable cone are in 
one to one correspondence with classes of divisorial and fibre 
type contractions from small $\QQ$-factorial modifications of 
$X$ to normal projective varieties.

In particular, contractions given by divisors in the boundary of the effective cone, 
corresponding to the first three sets of equalities, namely $(A_n)$ and $(B_n)$,
 are of fibre type contractions (i.e. projections to lower dimensional
 Mori dream spaces), while contractions associated to the last set of equalities, 
namely $(D_{n,t})$,
 corresponding to the boundary of the dual
 effective cone, are divisorial contractions.

\section{A new notion of expected dimension}\label{new exp dim with secants}

Secant varieties {to the rational normal curve} and cones over them are a natural generalization of the linear obstructions.
In this section we introduce a new notion of expected dimension  for linear systems with $n+3$ points in 
general position, Definition \ref{new definition rnc},
that takes into account their contributions.
Furthermore we conjecture that those are
the {only non-linear obstructions}, see Conjecture \ref{conjecture}.

In Section \ref{evidences} we prove this conjecture for $n\le 3$ and for 
some homogeneous linear systems in families. 

We adopt the same notation as in the previous sections \eqref{dimension cone} and
\eqref{multiplicity cone}.
We recall here that the join $\J(I, \sigma_t)$ has dimension $r_{I,\sigma_t}\le n-1$ whenever
$0\le t \le l+\epsilon$, $n=2l+\epsilon$ and $0\le|I|\le n-2t$.

\begin{definition}\label{new definition rnc}
Let $\LL=\LL_{n,d}(m_1,\ldots,m_{n+3})$ be a linear system. 
The {\em (affine) secant linear virtual dimension} of $\LL$ is the number
\begin{equation}\label{RNC expected dimension}
%%\sldim:=
\sum_{I,\sigma_t}(-1)^{|I|}{{n+k_{I,\sigma_t}-r_{I,\sigma_t}-1}\choose n},
\end{equation}

where the sum ranges over all indexes $I\subset\{1,\dots,n+3\}$ and $t$ such that 
$0\le t \le l+\epsilon$, $n=2l+\epsilon$  and $0\le|I|\le n-2t$.

The {\em (affine) secant linear expected dimension} of $\LL$, denoted by $\sldim(\LL)$ is 
defined as follows: if
the linear system $\LL$ is contained in a linear system whose secant linear virtual dimension
 is negative, then
we set $\sldim(\LL)=0$, otherwise we define $\sldim(\LL)$ to be
 the maximum between the secant linear virtual dimension of $\LL$ and $0$.
\end{definition}

\begin{remark}
Using the base locus lemma (Lemma \ref{base locus lemma for secants}), one may generalise formula \eqref{RNC expected dimension}
for arbitrary  number of points, by taking into account all of the rational normal curves
of degree $n$ given by sets of $n+3$ points  (and related cycles).
\end{remark}

\begin{remark}\label{no multiple rnc}
One can easily verify that $k_{I,\sigma_t}\le r_{I,\sigma_t}$, so its corresponding Newton binomial 
in \eqref{RNC expected dimension} is zero. In particular one can check that this inequality is satisfied for all $I$ and $t$ when $k_C\le 1$, namely
when the rational normal curve $C$ is contained in the base locus of $\ls$ at most simply.
In all of these cases  $\sldim(\ls)=\ldim(\ls)$.
\end{remark}

\begin{conjecture}\label{conjecture}
Let $\LL$ be a non-empty linear system of $\PP^n$ with $n+3$ base points in general position
 and let $C$ be the rational normal curve through the base points. 
{Then $\LL$ is special only if its base locus contains 
either linear cycles, or cones over the secant varieties $\sigma_t$ of $C$.
Moreover, we have
$\dim(\LL)=\sldim(\LL).$}
\end{conjecture}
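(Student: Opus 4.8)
The plan is to prove the dimension formula $\dim(\LL)=\sldim(\LL)$, from which the ``special only if'' clause follows: if $\Bs(\LL)$ contains no linear cycle and no cone over any $\sigma_t$, then all the numbers $k_{I,\sigma_t}$ with $0\le|I|\le n-2t$ are $\le 0$, in particular $k_C\le 0$, so by Remark \ref{no multiple rnc} one has $\sldim(\LL)=\ldim(\LL)$ and concludes by the theory of linear speciality. To establish $\dim(\LL)=\sldim(\LL)$ I would first reduce to the case of a Cremona reduced $\LL$. By \eqref{Cremona preserves dim} the dimension is a Weyl invariant, and a direct check — of exactly the type carried out in the proof of Theorem \ref{effectivity lemma Pn n+3}, Case (2.b) — shows that the standard Cremona transformation permutes the quantities $k_{I,\sigma_t}$, so $\sldim$ is a Weyl invariant as well. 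By Corollary \ref{cremona reduction implies movable} a Cremona reduced $\LL$ has no fixed divisorial component of type $\mathcal{A}$, i.e.\ it lies in the movable cone $\Mov(X)$ of Theorem \ref{thm movable cone}.

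For such an $\LL$, Lemmas \ref{linear base locus} and \ref{base locus lemma rnc} exhibit the linear cycles $L_I$ and the secant cones $\J(L_I,\sigma_t)$ of codimension $\ge 2$ inside $\Bs(\LL)$, each with its exact multiplicity $k_{I,\sigma_t}$. The idea is then to impose the vanishing along these cycles one at a time, ordered by increasing dimension, via a Castelnuovo-type exact sequence, after checking that (i) these are the only components of $\Bs(\LL)$ and that the scheme structure along them is the one forced by Lemma \ref{base locus lemma rnc}, and that (ii) the resulting residual system $\LL'$ is non-special. Granting (i) and (ii), the equality $\dim(\LL)=\dim(\LL')$ together with a purely combinatorial bookkeeping gives the formula: the number of independent conditions that imposing multiplicity $k_{I,\sigma_t}$ along $\J(L_I,\sigma_t)$ places on degree-$d$ forms is, after the inclusion--exclusion corrections, precisely the summand $(-1)^{|I|}\binom{n+k_{I,\sigma_t}-r_{I,\sigma_t}-1}{n}$ of Definition \ref{new definition rnc}, the alternating signs being the Möbius function of the poset of these cones (its subposet $t=0$ reproducing the bookkeeping behind $\ldim$ in \eqref{linvirtdim}). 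I would isolate this combinatorial identity as a separate lemma, since it is self-contained but delicate.

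The step I expect to be the main obstacle is establishing (i) and (ii). A natural line of attack is to Cremona-reduce and strip off divisorial secant cones of type $\mathcal{A}$ repeatedly in order to decrease $k_C$, eventually reaching a system with $k_C\le 1$, where $\sldim=\ldim$ and, as soon as the effectivity condition \eqref{EffectivityCondition} holds, linear non-speciality is guaranteed by Theorem \ref{theorem n+3}; the difficulty is that this reduction need not terminate directly, so one must control the intermediate systems by a genuine vanishing theorem. For this I would invoke the Ciliberto--Miranda degeneration technique (as in \cite{CM1,BraDumPos2}): degenerate $\PP^n$ to a reducible variety, distribute the $n+3$ points and the rational normal curve $C$ between the two components so that each limit system has strictly smaller complexity, identify the limits with linear systems on blow-ups carrying fewer points or lower degree, and run an induction on $n$, on $d$ and on $M=\sum m_i$. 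Finally, for the unconditional cases one checks $n=2$ using the classical Cremona theory of the plane and the conic through five points, $n=3$ by combining the above reduction with the Laface--Ugaglia circle of results \cite{laface-ugaglia-TAMS}, and the homogeneous families $\LL_{n,d}(m^{n+3})$ by the explicit degenerations of Section \ref{evidences}, which simultaneously settle the outstanding case $s=n+3$ of the Strong Fr\"oberg--Iarrobino Conjecture \ref{FI}.
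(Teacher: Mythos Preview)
The statement you are attempting to prove is \emph{Conjecture}~\ref{conjecture}: the paper does not prove it in general, and your proposal does not close the gap either. What the paper actually establishes is only a list of evidences (Section~\ref{evidences}): the conjecture holds for $n=2$ and $n=3$, for any $\LL$ that Cremona-reduces to a system with $k_C\le 1$ (Proposition~\ref{Cremona linear}), and for certain homogeneous families (Propositions~\ref{multiples of secant}--\ref{m=n conjecture true}). In each of these cases the argument is short: one invokes the two invariance lemmas, Lemma~\ref{cone-invariant} and Lemma~\ref{cremona-invariant}, to transport $\sldim$ through Cremona and cone reductions, and then lands in a situation covered by existing dimension results (Theorem~\ref{theorem n+3}, Riemann--Roch on the blown-up plane, or De~Volder--Laface for $n=3$). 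No degeneration argument and no direct residual-sequence computation is carried out.

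Your proposal correctly identifies the Cremona-invariance of $\sldim$ (this is exactly Lemma~\ref{cremona-invariant}) and correctly handles the low-dimensional and homogeneous cases along the paper's lines. The genuine gap is your steps (i) and (ii). You acknowledge these as ``the main obstacle'', and indeed they are the entire content of the open conjecture: knowing that the base locus of a Cremona-reduced $\LL$ contains \emph{at least} the cycles $\J(L_I,\sigma_t)$ (Lemma~\ref{base locus lemma rnc}) says nothing about the converse, nor about the scheme structure, nor about non-speciality of the residual. Your suggested Ciliberto--Miranda degeneration is only a direction, not an argument; the paper does not pursue it and there is no indication in the literature that it goes through for arbitrary $\LL$ with $n+3$ points. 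Likewise, the ``combinatorial identity'' you would isolate as a separate lemma --- that the alternating sum in Definition~\ref{new definition rnc} equals the number of conditions imposed by the cycles via inclusion--exclusion --- is not established anywhere; it is in effect a restatement of the conjectural equality $\dim(\LL)=\sldim(\LL)$, not a step towards proving it. In short, after the Cremona reduction your outline becomes a restatement of the problem rather than a proof.
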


We illustrate this idea in the following examples.
\begin{example}
The linear system $\LL=\LL_{6,8}(6^9)$ is linearly special, since
$\dim(\LL)=1$ and $\ldim(\LL)=-147$. The rational curve  $C$, given by the $9$ base points, 
is contained in the singular locus of the fixed hypersurface $\ls$
 with multiplicity $k_C=6$.
Moreover, for each of the $9$ base points, say $p$, the 
cone $\J(p,C)$ as well as $\sigma_2$
are contained with multiplicity $4$ in the singular locus of $\ls$, by Lemma \ref{base locus lemma rnc}.
Hence one can compute $\sldim=1$.
\end{example}

\begin{example}
Consider the linear system $\ls_{4,10}(9,7^3,5^3)$.
The rational normal curve is contained $5$ times and the cone $J(p_1,C)$ is contained with multiplicity $4$ 
 We leave it to the reader to verify that
$\sldim(\ls)=2$ and that $\dim(\ls)=2$, the last equality following a series of Cremona transformations (see Section \ref{cremona}).
\end{example}

\subsection{Properties of $\sldim$}

In this section we prove 
two technical lemma which will be useful in the sequel.

Recall that a linear system $\LL=\LL_{n,d}(d,m_2,\ldots,m_s)$ has the same dimension of the linear system 
$\LL_{n-1,d}(m_2,\ldots,m_s)$. We will call the second system the {\it cone reduction} of $\LL$ and we denote it by 
 $\textrm{Cone}(\LL)$.

\begin{lemma}\label{cone-invariant}
The secant linear expected dimension of a linear system $\ls=\LL_{n,d}(d,m_2,\ldots,m_{n+3})$ is invariant under cone reduction:
$$\sldim(\LL)=\sldim(\mathrm{Cone}(\LL)).$$
\end{lemma}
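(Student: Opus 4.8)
The plan is to compare the defining sum \eqref{RNC expected dimension} for $\LL=\LL_{n,d}(d,m_2,\dots,m_{n+3})$ with that for $\mathrm{Cone}(\LL)=\LL_{n-1,d}(m_2,\dots,m_{n+3})$, and match the two term by term. First I would fix notation: write $C$ for the rational normal curve of degree $n$ through $p_1,\dots,p_{n+3}$ in $\PP^n$, and $C'$ for the rational normal curve of degree $n-1$ through $p_2,\dots,p_{n+3}$ in $\PP^{n-1}$. The key geometric observation (already implicit in Section~\ref{section divisorial cones}, where projection from a linear space spanned by points sends $C$ to a rational normal curve of lower degree) is that projecting $\PP^n$ from $p_1$ carries $C$ to $C'$ and, more generally, carries each join $\J(L_I,\sigma_t(C))$ with $1\in I$ to $\J(L_{I\setminus\{1\}},\sigma_t(C'))$ in $\PP^{n-1}$, while a join with $1\notin I$ becomes $\J(L_I,\sigma_t(C'))$ again in $\PP^{n-1}$. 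So the index set over which \eqref{RNC expected dimension} ranges for $\LL$ splits into pairs $\{I\cup\{1\},\,I\}$ with $I\subseteq\{2,\dots,n+3\}$, and the second of each pair is exactly an index set occurring in the sum for $\mathrm{Cone}(\LL)$.

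Next I would check the arithmetic of the three ingredients $k_{I,\sigma_t}$, $r_{I,\sigma_t}$ and the ambient dimension $n$ under these moves. For $\LL$ itself, since $m_1=d$, one has $k_C = (d + m_2+\dots+m_{n+3}) - nd = (m_2+\dots+m_{n+3}) - (n-1)d = k_{C'}$, i.e. the curve multiplicity is unchanged; and from \eqref{multiplicity cone}, $k_{I,\sigma_t}$ for $\LL$ with $1\notin I$ equals $k_{I,\sigma_t}$ for $\mathrm{Cone}(\LL)$ because the $d$'s and $m_i$'s match and $k_C=k_{C'}$. Similarly $r_{I,\sigma_t}=|I|+2t-1$ depends only on $|I|$ and $t$, so it too is preserved. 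Meanwhile, for a pair $(I\cup\{1\},\,I)$ in the sum for $\LL$, a short computation gives $k_{I\cup\{1\},\sigma_t}=k_{I,\sigma_t}+(m_1-d)=k_{I,\sigma_t}$ (using $m_1=d$) and $r_{I\cup\{1\},\sigma_t}=r_{I,\sigma_t}+1$, so the binomial $\binom{n+k-r-1}{n}$ for the index $I\cup\{1\}$ in dimension $n$ equals $\binom{n+k_{I,\sigma_t}-r_{I,\sigma_t}-2}{n}$. The signs $(-1)^{|I\cup\{1\}|}=-(-1)^{|I|}$, so within a pair the two binomials appear with opposite signs.

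Then the identity reduces to a binomial collapse: for each $I\subseteq\{2,\dots,n+3\}$ (with admissible $t$, noting that $|I\cup\{1\}|\le n-2t$ forces $|I|\le n-1-2t$, which is exactly the admissibility range in $\PP^{n-1}$), I must show
\[
(-1)^{|I|}\binom{n+k_{I,\sigma_t}-r_{I,\sigma_t}-1}{n}-(-1)^{|I|}\binom{n+k_{I,\sigma_t}-r_{I,\sigma_t}-2}{n}=(-1)^{|I|}\binom{(n-1)+k_{I,\sigma_t}-r_{I,\sigma_t}-1}{n-1},
\]
which is just the Pascal relation $\binom{N}{n}-\binom{N-1}{n}=\binom{N-1}{n-1}$ applied with $N=n+k_{I,\sigma_t}-r_{I,\sigma_t}-1$. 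Summing over all $I$ and all admissible $t$ gives the secant linear virtual dimension of $\mathrm{Cone}(\LL)$, proving equality of the virtual dimensions. Finally I would pass from virtual to expected dimension: the clause ``$\LL$ is contained in a linear system with negative secant linear virtual dimension'' must be shown equivalent to the same clause for $\mathrm{Cone}(\LL)$, which follows because the cone-reduction operation commutes with increasing $d$ (replacing $\LL_{n,d}(d,m_2,\dots)$ by $\LL_{n,d+1}(d+1,m_2,\dots)$, which is again a cone system) and the virtual-dimension identity just proved holds uniformly. I expect the main obstacle to be the careful bookkeeping of the admissibility ranges for $t$ and $|I|$ under the shift $|I|\mapsto|I|+1$, $n\mapsto n-1$ — in particular making sure no term in either sum is left unpaired and that the top value $t=l+\epsilon$ matches correctly when $n$ decreases — together with verifying that the Pascal collapse is valid even when $N<n$ (where both binomials vanish), so that the identity is a genuine term-by-term equality rather than merely a sum identity.
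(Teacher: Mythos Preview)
Your proposal is correct and follows essentially the same approach as the paper: both pair the terms indexed by $I$ and $I\cup\{1\}$ (for $I\subseteq\{2,\dots,n+3\}$) and collapse each pair via the Pascal identity $\binom{a}{b}=\binom{a-1}{b}+\binom{a-1}{b-1}$ into the corresponding term for $\mathrm{Cone}(\LL)$. You are more explicit than the paper about the arithmetic checks (that $k_{I\cup\{1\},\sigma_t}=k_{I,\sigma_t}$ because $m_1=d$, and $r_{I\cup\{1\},\sigma_t}=r_{I,\sigma_t}+1$) and about the admissibility-range bookkeeping and the passage from virtual to expected dimension, all of which the paper leaves implicit.
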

\begin{proof}
Let $\textrm{Cone}(\LL)=\LL_{n-1,d}(m_2,\ldots,m_{n+3})$ be the cone reduction of $\ls$.

We write the formula \eqref{new definition rnc} for $\sldim(\LL)$ as follows:
$$\sldim(\LL)=\sum_{I,t}B_\ls(\J(L_I,\sigma_t))$$
denoting by $B_\ls(\J(L_I,\sigma_t))$ the contribution in the sum given by the cycle $\J(L_I,\sigma_t)$
that is $$B_\ls(\J(L_I,\sigma_t)):=(-1)^{|I|}\binom{n+k_{I,\sigma_t}-r_{I,\sigma_t}-1}{n}.$$

Now, recalling the formula ${{a}\choose{b}}=\binom{a-1}{b}+\binom{a-1}{b-1}$,
it is easy to check that,
for any $I\subseteq\{2,\ldots,n+3\}$, one has
$$B_\LL(\J(L_I,\sigma_t))+B_\LL(\J(L_{I\cup\{1\}},\sigma_t))=B_{\textrm{Cone}(\LL)}(\J(L_I,\sigma_t)).$$
\end{proof}

\begin{lemma}\label{cremona-invariant}
The secant linear expected dimension of a linear system $\ls=\LL_{n,d}(m_1,\ldots,m_{n+3})$ is invariant under Cremona transformations:
$$\sldim(\LL)=\sldim(\Cr(\LL)).$$
\end{lemma}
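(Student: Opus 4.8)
The plan is to reduce to a single standard Cremona transformation, rewrite the summands of the secant linear virtual dimension so that the effect of Cremona becomes transparent, and match them by an explicit bijection of index sets. Every element of the relevant group is a composition of permutations of the points and standard Cremona transformations (Section \ref{cremona}); permuting $p_1,\dots,p_{n+3}$ fixes $C$, the $\sigma_t$, the number $k_C$ and $M$, and merely relabels the spaces $L_I$, so the formula in Definition \ref{new definition rnc} is manifestly permutation invariant. Hence it suffices to treat $\Cr=$ the standard Cremona based at $p_1,\dots,p_{n+1}$. Recall from Section \ref{cremona} that $\Cr$ is an involution with $\dim(\ls)=\dim(\Cr(\ls))$ by \eqref{Cremona preserves dim}, that $\Cr(\ls)=\ls_{n,d'}(m_1',\dots,m_{n+3}')$ with $d'=d-c$, $m_i'=m_i-c$ for $i\le n+1$, $m_i'=m_i$ for $i=n+2,n+3$, and $c=\sum_{i=1}^{n+1}m_i-(n-1)d$; if $\ls$ is empty the statement is trivial, so we assume $\ls\neq\emptyset$.

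Next I would rewrite each summand. Using \eqref{expanded} together with $r_{I,\sigma_t}=|I|+2t-1$, for every admissible pair $(I,t)$,
\begin{equation*}
n+k_{I,\sigma_t}-r_{I,\sigma_t}-1=(n+d)+t\bigl(M-(n+1)d-2\bigr)+\sum_{i\in I}(m_i-d-1).
\end{equation*}
Under $\Cr$ the quantity $M-(n+1)d-2$ is unchanged, $m_i-d-1$ is unchanged for $i\le n+1$ and increases by $c$ for $i\in\{n+2,n+3\}$, while $n+d$ decreases by $c$; moreover $\sum_{i=1}^{n+1}m_i=c+(n-1)d$ gives $M-(n+1)d-2=c+m_{n+2}+m_{n+3}-2d-2$. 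Now split the index set of \eqref{RNC expected dimension} according to $J:=I\cap\{n+2,n+3\}$ and define a map $\Phi$ from the index set of $\sldim(\ls)$ to that of $\sldim(\Cr(\ls))$: $\Phi(I,t)=(I,t)$ when $|J|=1$, or when $J=\emptyset$ and $t=0$; $\Phi(I,t)=(I\cup\{n+2,n+3\},t-1)$ when $J=\emptyset$ and $t\ge1$; and $\Phi(I,t)=(I\setminus\{n+2,n+3\},t+1)$ when $J=\{n+2,n+3\}$. A direct check against the constraint $|I|\le n-2t$ shows $\Phi$ is a well-defined bijection (it exchanges bijectively the cycles with $J=\emptyset$, $t\ge 1$ and those with $J=\{n+2,n+3\}$, and is the identity on the other two families), and the identities above show that for these first three families the binomial argument of $(I,t)$ for $\ls$ equals that of $\Phi(I,t)$ for $\Cr(\ls)$, with matching signs $(-1)^{|I|}$; so those summands cancel pairwise in $\sldim(\ls)-\sldim(\Cr(\ls))$.

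What remains is the difference of the sub-sums over the fourth family, i.e.\ over the linear cycles $L_I$ with $I\subseteq\{1,\dots,n+1\}$ and $|I|\le n$, namely
\begin{equation*}
\sum_{\substack{I\subseteq\{1,\dots,n+1\}\\ |I|\le n}}(-1)^{|I|}\binom{n+k_I-|I|}{n}\ -\ \sum_{\substack{I\subseteq\{1,\dots,n+1\}\\ |I|\le n}}(-1)^{|I|}\binom{n+k_I'-|I|}{n},
\end{equation*}
with $k_I=\sum_{i\in I}m_i-(|I|-1)d$ and $k_I'$ the analogue for $\Cr(\ls)$. This is exactly the linear-cycle contribution supported on the Cremona base points; since $\ls\neq\emptyset$ one has $c\le d$ (e.g.\ by $(B_n)$ of Theorem \ref{effectivity lemma Pn n+3}), so each of the two sums is the linear virtual dimension of an $(n+1)$-point system and of its Cremona transform. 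By Theorem \ref{theorem n+3} systems with at most $n+2$ points are linearly non-special and $\Cr$ preserves dimension, so—together with the computations of \cite{BraDumPos1,DumPos}—this linear virtual dimension is Cremona invariant; hence the two sums agree and $\sldim(\ls)=\sldim(\Cr(\ls))$.

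The routine part is the algebraic verification that the first three families of $\Phi$ preserve the binomial arguments and the bookkeeping that $\Phi$ respects the constraints. The real point—the main obstacle—is the \emph{design} of $\Phi$: it must be arranged so that the only summands not matched term by term are precisely the linear cycles through the Cremona base points, whose invariance is then inherited from the already-established theory of linearly obstructed systems with at most $n+2$ points (exactly as in Lemma \ref{cone-invariant} one uses the additivity of the binomials, here repackaged as the bijection). The clause in Definition \ref{new definition rnc} about containment in a system of negative secant linear virtual dimension is dealt with in the same fashion, using that $\Cr$ is a dimension-preserving involution.
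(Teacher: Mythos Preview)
Your proof is correct and follows essentially the same approach as the paper's: both split the sum in \eqref{RNC expected dimension} according to $J=I\cap\{n+2,n+3\}$, match the families $|J|=1$, $\{J=\emptyset,\ t\ge1\}$, and $J=\{n+2,n+3\}$ term by term via the same bijection (your $\Phi$ is exactly the paper's correspondence $k_{I,\sigma_t}^c=k_{I\cup\{n+2,n+3\},\sigma_{t-1}}$ etc., written from the $\ls$-side), and handle the remaining block $\{J=\emptyset,\ t=0\}$ by invoking linear non-speciality of the $(n+1)$-point system $\widetilde{\LL}$ together with \eqref{Cremona preserves dim}. Your explicit rewriting $n+k_{I,\sigma_t}-r_{I,\sigma_t}-1=(n+d)+t(M-(n+1)d-2)+\sum_{i\in I}(m_i-d-1)$ is a clean way to carry out the verification the paper leaves to the reader, but the architecture of the argument is the same.
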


\begin{proof}
Assume that
$$c=\sum_{i=1}^{n+1}m_i-(n-1)d\ge1$$
and let
$$\Cr(\LL)=\LL_{n,d-c}(m_1-c,\ldots,m_{n+1}-c,m_{n+2},m_{n+3}).$$ 
be the system obtained after the Cremona transformation, see Section \ref{cremona}.

First of all consider the linear system obtained from $\LL$ forgetting the last two points:
$\widetilde{\LL}=\LL_{n,d}(m_1,\ldots,m_{n+1})$ and 
let $\Cr(\widetilde{\LL})=\LL_{n,d-c}(m_1-c,\ldots,m_{n+1}-c)$ be the corresponding Cremona transform.
Since $\widetilde{\LL}$ and $\Cr(\widetilde{\LL})$ are linearly non-special by Theorem \ref{theorem n+3}
and a Cremona transformation preserves the dimension of a linear system (see \eqref{Cremona preserves dim}) we have:
$$\ldim(\widetilde{\LL})=\dim(\widetilde{\LL})=\dim(\Cr(\widetilde{\LL}))=\ldim(\Cr(\widetilde{\LL})).$$
Using the same notation as in the proof of Lemma \ref{cone-invariant}, one can split the sum as follows:
\begin{align*}
\sldim(\LL)=\ldim(\widetilde{\LL})&
+{\sum_{|I\cap\{n+2,n+3\}|=1}B_\LL(\J(L_I,\sigma_t))}\\
&+{\sum_{|I\cap\{n+2,n+3\}|=2}B_\LL(\J(L_I,\sigma_t))}\\
&+{\sum_{t\ge1,I\cap\{n+2,n+3\}=\emptyset }B_\LL(\J(L_I,\sigma_t))},
\end{align*}
and similarly
\begin{align*}
\sldim(\Cr(\LL))=\ldim(\Cr(\widetilde{\LL}))&
+{\sum_{|I\cap\{n+2,n+3\}|=1}B_{\Cr(\LL)}(\J(L_I,\sigma_t))}\\
&+{\sum_{|I\cap\{n+2,n+3\}|=2}B_{\Cr(\LL)}(\J(L_I,\sigma_t))}\\
&+{\sum_{t\ge1,I\cap\{n+2,n+3\}=\emptyset }B_{\Cr(\LL)}(\J(L_I,\sigma_t))}.
\end{align*}
Now it is not difficult to check that $\sldim(\Cr(\LL))=\sldim(\LL)$. 
Indeed
first of all we note that
\begin{equation} 
\label{formula k_C}
k_C^c=\sum_{i=1}^{n+3} m_i-c(n+1)-n(d-c)=k_C-c=m_{n+2}+m_{n+3}-d,
\end{equation}
where we denote by $k_C$ (resp.\ $k_C^c$) the multiplicity of containment of $C$ in the base locus of $\LL$ (resp.\ of $\Cr(\LL)$).

Let us also denote by $k_{I,\sigma_t}$ (resp.\ $k_{I,\sigma_t}^c$) the multiplicity of containment of 
$\J(L_I,\sigma_t)$ in the base locus of $\LL$ (resp.\ of $\Cr(\LL)$). We leave it to the reader to check by using \eqref{formula k_C} that the following holds.

If
$|I\cap\{n+2,n+3\}|=1$, then  $k_{I,\sigma_t}^c=k_{I,\sigma_t}$,
hence
$B_{\Cr(\LL)}(\J(L_I,\sigma_t))=B_{\LL}(\J(L_I,\sigma_t))$.

If $|I\cap\{n+2,n+3\}|=2$, 
then $k_{I,\sigma_t}^c=k_{I\setminus\{n+2,n+3\},t+1}$,
hence
$B_{\Cr(\LL)}(\J(L_I,\sigma_t))=B_{\LL}(\J(L_{I\setminus\{n+2,n+3\}},\sigma_{t+1}))$.

If $t\ge1$ and $I\cap\{n+2,n+3\}=\emptyset$, then 
$k_{I,\sigma_t}^c=k_{I\cup\{n+2,n+3\},t-1}$,
hence
$B_{\Cr(\LL)}(\J(L_I,\sigma_t))=B_{\LL}(\J(L_{I\cup\{n+2,n+3\}},\sigma_{t-1}))$.
\end{proof}

\subsection{Cases where Conjecture \ref{conjecture} holds}\label{evidences}

In this section we provide a list of evidences to Conjecture \ref{conjecture}. 

\subsubsection{Conjecture \ref{conjecture} holds for Cremona {transforms} of only linearly obstructed linear systems}

\begin{proposition}\label{Cremona linear}
Let $\ls$  be linear system with $n+3$ base points for which $k_C\le1$. Any linear system $\ls'$ that can be Cremona reduced to $\ls$
satisfies Conjecture \ref{conjecture}.
\end{proposition}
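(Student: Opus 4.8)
The plan is to transport the whole question back to $\ls$ itself. Since $\ls'$ is obtained from $\ls$ by a finite composition of permutations of the points and standard Cremona transformations, and since the dimension is invariant under such maps by \eqref{Cremona preserves dim} while the secant linear expected dimension is invariant by Lemma \ref{cremona-invariant}, it suffices to prove that $\ls$ itself satisfies Conjecture \ref{conjecture}; the statement for $\ls'$ then follows formally. We may assume $\ls$ (equivalently $\ls'$) non-empty, since otherwise there is nothing to prove; in particular $1\le m_i\le d$ for all $i$.

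The key observation is that the hypothesis $k_C\le 1$ together with non-emptiness puts $\ls$ in the range of Theorem \ref{theorem n+3}. Writing $s(d)$ for the number of points of multiplicity $d$, one cannot have $s(d)\ge n$: otherwise $n$ of the multiplicities would equal $d$ and the remaining three would be at least $1$, giving $M\ge nd+3$, against $M=nd+k_C\le nd+1$. Hence $s(d)\le n-1$, so for $s=n+3$ we get $\min\bigl(n-s(d),\,s-n-2\bigr)=1$, and condition \eqref{EffectivityCondition} becomes precisely $M\le nd+1$, i.e. $k_C\le 1$, while $1\le m_i\le d$ holds by non-emptiness. Theorem \ref{theorem n+3} then yields that $\ls$ is linearly non-special: $\dim(\ls)=\ldim(\ls)$. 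On the other hand, $k_C\le 1$ implies $\sldim(\ls)=\ldim(\ls)$ by Remark \ref{no multiple rnc}, since each $k_{I,\sigma_t}\le r_{I,\sigma_t}$, so that every summand of \eqref{RNC expected dimension} not already present in the ordinary virtual dimension vanishes. Combining the two equalities gives $\dim(\ls)=\sldim(\ls)$, whence $\dim(\ls')=\sldim(\ls')$ by the invariances above; this is the second assertion of the conjecture for $\ls'$.

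It remains to verify the first assertion of the conjecture for $\ls'$, namely that if $\ls'$ is special then its base locus contains a linear cycle or a cone over some $\sigma_t$. Suppose $\ls'$ is special, i.e. $\dim(\ls')>\edim(\ls')$. As $\ls'$ is non-empty and $\dim(\ls')=\sldim(\ls')$, this gives $\sldim(\ls')>\edim(\ls')\ge 0$; in particular $\sldim(\ls')$ equals the secant linear virtual dimension \eqref{RNC expected dimension} of $\ls'$ and strictly exceeds its virtual dimension \eqref{virtual dim}. The two differ only by the summands of \eqref{RNC expected dimension} indexed by pairs $(I,t)$ other than $(\emptyset,0)$ and $(\{i\},0)$, so at least one such summand is nonzero, which forces $k_{I,\sigma_t}>r_{I,\sigma_t}$ for some such $(I,t)$. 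If $t=0$ this says the linear cycle $L_I\cong\PP^{|I|-1}$ lies in the base locus of $\ls'$ with multiplicity exceeding its dimension (Lemma \ref{linear base locus}); if $t\ge1$ it says the cone $\J(L_I,\sigma_t)$ lies in the base locus of $\ls'$ with multiplicity $k_{I,\sigma_t}$ (Lemma \ref{base locus lemma rnc}). In either case the base locus contains one of the prescribed cycles, so Conjecture \ref{conjecture} holds for $\ls'$.

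The one point I expect to require genuine care is the reduction to Theorem \ref{theorem n+3}: a priori $\ls$ could carry several points of multiplicity $d$, which would make $\min(n-s(d),s-n-2)$ drop below $1$ and render the available effectivity bound too weak to match $k_C\le 1$; the short counting argument above, using $k_C\le1$, is exactly what excludes this. Everything else is bookkeeping with the Cremona-invariance of $\dim$ and $\sldim$ and with Remark \ref{no multiple rnc}.
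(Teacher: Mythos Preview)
Your proof is correct and follows the same approach as the paper: use Cremona-invariance of both $\dim$ and $\sldim$ to reduce to $\ls$, then combine Remark \ref{no multiple rnc} ($\sldim(\ls)=\ldim(\ls)$ when $k_C\le1$) with Theorem \ref{theorem n+3} ($\ldim(\ls)=\dim(\ls)$). The paper's proof is the single chain $\sldim(\ls')=\sldim(\ls)=\ldim(\ls)=\dim(\ls)$, citing exactly those three ingredients.

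You supply two pieces of reasoning that the paper leaves implicit. First, the paper simply invokes Theorem \ref{theorem n+3} without checking \eqref{EffectivityCondition}; your counting argument ruling out $s(d)\ge n$ under $k_C\le1$ is exactly the justification needed, and it is good that you flagged it. Second, the paper only addresses the dimension equality, while you also verify the first clause of Conjecture \ref{conjecture} (speciality forces a linear or secant-type base locus component); your argument that $\sldim(\ls')>\edim(\ls')$ forces some nonzero summand in \eqref{RNC expected dimension} beyond the virtual-dimension terms, hence some $k_{I,\sigma_t}>r_{I,\sigma_t}\ge1$, is correct and cleanly deduces the base-locus statement from the base locus lemmas.
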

\begin{proof}
We have $\sldim(\ls')=\sldim(\ls)=\ldim(\ls)=\dim(\ls)$. The first equality follows from Lemma \ref{cremona-invariant}, the second follows from 
Definition \ref{new definition rnc} (see also Remark \ref{no multiple rnc}), the last inequality follows from Theorem \ref{theorem n+3}.
\end{proof}

\subsubsection{Conjecture  \ref{conjecture} is true for $n\le3$}\label{low dimension conj true}

\begin{proposition}
Conjecture \ref{conjecture} holds for $n=2$.
\end{proposition}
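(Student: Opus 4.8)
The plan is to reduce the general $n=2$ statement to the known planar results, using the effectivity/movability description of Theorem \ref{effectivity lemma Pn n+3}, Theorem \ref{thm movable cone}, and the Cremona-invariance of $\sldim$ proved in Lemma \ref{cremona-invariant}. First I would observe that for $n=2$ we have $n+3=5$ points, and the only secant variety to the rational normal (conic) curve $C$ that is not a point and not all of $\PP^2$ is $\sigma_1=C$ itself; cones $\J(L_I,\sigma_1)$ with $|I|\ge 1$ are already the whole plane. So in this case the only genuinely non-linear cycle appearing in $\sldim$ is the conic $C$ through the five points, with multiplicity of containment $k_C=M-2d$ (notation of \eqref{multiplicity rnc}). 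Thus $\sldim(\LL_{2,d}(m_1,\dots,m_5))$ differs from $\ldim$ only by the term coming from $C$, and Conjecture \ref{conjecture} for $n=2$ becomes the assertion that a non-empty planar system on five general points is special only if it contains linear cycles (pairs of points, i.e. lines through two of the points) or the conic $C$ in its base locus, together with the formula $\dim(\LL)=\sldim(\LL)$.

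Next I would reduce to Cremona-reduced systems. By Lemma \ref{cremona-invariant}, $\sldim$ is a Cremona invariant, and by \eqref{Cremona preserves dim} so is $\dim$; moreover the standard quadratic Cremona transformation based at three of the five points is a composition of the Weyl group action, so every non-empty $\LL_{2,d}(m_1,\dots,m_5)$ is Cremona-equivalent to a Cremona-reduced one. Hence it suffices to verify the conjecture for Cremona-reduced non-empty systems. For such a system, $c=m_1+m_2+m_3-d\le 0$ after ordering $m_1\ge\dots\ge m_5$, and by the effectivity description (conditions $(A_2),(B_2),(C_{2,1})$ of the $n=2$ case) together with the remark following Lemma \ref{base locus lemma rnc} one gets $k_C\le m_5\le m_i$ for all $i$; in particular if $k_C\le 1$ we are done by Proposition \ref{Cremona linear} (or directly by Remark \ref{no multiple rnc} plus Theorem \ref{theorem n+3}, since then $\sldim=\ldim=\dim$). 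So the only remaining case is $k_C\ge 2$ for a Cremona-reduced system.

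For $k_C\ge 2$ I would remove the conic with its full multiplicity: by Lemma \ref{base locus lemma rnc}, $C$ is contained in the base locus of $\LL$ with exact multiplicity $k_C$, and the residual is $\LL'=\LL_{2,d'}(m'_1,\dots,m'_5)$ with $d'=d-2k_C$, $m'_i=m_i-k_C\ge 0$. A short computation (using $m_1+m_2+m_3+m_4\le 2d$, which holds since the four-point subsystem is non-empty) shows $k_{C}'=M'-2d' = -k_C<0$, i.e. $C$ is no longer in the base locus of $\LL'$; so $\sldim(\LL')=\ldim(\LL')$ and $\dim(\LL')=\dim(\LL)$ while the binomial bookkeeping gives $\sldim(\LL)=\sldim(\LL')$ (the contribution of $C$ to $\sldim(\LL)$ is exactly $\binom{2+k_C-1-1}{2}=\binom{k_C}{2}$, and this is what is gained back by passing from $d,m_i$ to $d',m'_i$ in the linear terms — I would check this identity explicitly). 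It then remains to prove the conjecture for the residual system $\LL'$, which has $k_{C}'\le 0$; applying Proposition \ref{Cremona linear} (equivalently, the linear non-speciality results of Theorem \ref{theorem n+3}, valid for $s=n+2$ once we argue $\LL'$ effectively lives on four points, or a further Cremona reduction) finishes it. The main obstacle I expect is the careful verification of the two arithmetic identities — that removing $C$ with multiplicity $k_C$ lowers $k_C$ to a non-positive value and that $\sldim$ is unchanged under this "conic reduction" — and the bookkeeping to ensure the residual really does fall under an already-settled case (the $s\le n+2$ linear non-speciality theorem), rather than circling back to another $k_C\ge 2$ situation; monotonicity of $k_C$ under the reduction is what rules out an infinite regress.
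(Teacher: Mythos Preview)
Your approach is valid but takes a genuinely different route from the paper. The paper gives a direct two-line argument: for five general points in $\PP^2$ the Segre--Harbourne--Gimigliano--Hirschowitz conjecture is known, and Riemann--Roch on the blow-up yields
\[
\dim(\LL)=\binom{d+2}{2}-\sum_{i=1}^5\binom{m_i+1}{2}+\sum_{i<j}\binom{m_i+m_j-d}{2}+\binom{k_C}{2},
\]
which one verifies term by term to be exactly $\sldim(\LL)$. Your argument instead stays inside the paper's own machinery (Lemma~\ref{cremona-invariant} and Proposition~\ref{Cremona linear}), trading the external SHGH input for a reduction scheme. This works, but your third paragraph is superfluous: for a Cremona-reduced system with $m_1\ge\cdots\ge m_5$ one has $m_1+m_2+m_3\le d$ and $m_4+m_5\le m_2+m_3\le d$, hence $k_C=(m_1+m_2+m_3)+(m_4+m_5)-2d\le 0$ always (this is also the $n=2$ instance of Corollary~\ref{cremona reduction implies movable}). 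So the case $k_C\ge 2$ never arises after reduction, and your first two paragraphs already complete the proof. Incidentally, in the conic-removal step you compute $k'_C$ incorrectly: $k'_C=M'-2d'=(M-5k_C)-(2d-4k_C)=k_C-k_C=0$, not $-k_C$; the conclusion $k'_C\le 0$ is unaffected, but the slip is worth flagging.
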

\begin{proof}
Set  $\LL=\LL_{2,d}(m_1,\ldots,m_5)$.
It is a well-known fact that the Segre-Harbourne-Gimigliano-Hirschowitz Conjecture holds for five points.
Moreover, from Riemann-Roch Theorem on the blow-up projective plane it follows that
$$\dim(\ls)=\binom{d+2}{2}-\sum_1^5\binom{m_i+1}2+\sum_{i,j}\binom{m_i+m_j-d}2+\binom{k_C}2$$
and one can easily verify that the right-hand side of the above is $\sldim(\ls)$.
\end{proof}

\begin{proposition}
Conjecture \ref{conjecture} holds true in $n=3$.
\end{proposition}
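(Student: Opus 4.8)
The plan is to deduce the $n=3$ case from Proposition~\ref{Cremona linear}. The point is a single elementary inequality: every effective linear system with six points in $\PP^3$ is a Cremona transform of one whose rational normal curve multiplicity $k_C$ is at most $1$, so Proposition~\ref{Cremona linear} delivers both assertions of Conjecture~\ref{conjecture} at once.

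First I would reduce to the case that $\LL=\LL_{3,d}(m_1,\dots,m_6)$ is Cremona reduced. This is legitimate: $\dim$ is Cremona invariant by~\eqref{Cremona preserves dim}, $\sldim$ is Cremona invariant by Lemma~\ref{cremona-invariant}, and the Cremona reduction algorithm applied to an effective system terminates at an effective Cremona reduced system, since at each step the degree drops by $c\ge1$ and stays non-negative (for an effective six-point system one has $m_1+m_2+m_3+m_4\le 3d$, so $d'=3d-(m_1+m_2+m_3+m_4)\ge0$). Second, for Cremona reduced $\LL$ with $m_1\ge\cdots\ge m_6$ one has $m_1+m_2+m_3+m_4\le 2d$; combining $m_5+m_6\le m_1+m_2$ and $m_5+m_6\le m_3+m_4$ gives $2(m_5+m_6)\le m_1+m_2+m_3+m_4\le 2d$, hence $m_5+m_6\le d$ and therefore
\[k_C=\sum_{i=1}^{6}m_i-3d\le 2d+d-3d=0\le 1 .\]
Now Proposition~\ref{Cremona linear} applies directly: by Remark~\ref{no multiple rnc} all the secant contributions with $t\ge1$ in~\eqref{RNC expected dimension} vanish, so $\sldim(\LL)=\ldim(\LL)$; and $k_C\le0$ gives $\sum_i m_i\le 3d$, so (since two multiplicities equal to $d$ would be incompatible with $m_1+m_2+m_3+m_4\le 2d$, hence at most one $m_i$ equals $d$) the hypothesis~\eqref{EffectivityCondition} of Theorem~\ref{theorem n+3} is met and $\LL$ is linearly non-special, i.e.\ $\dim(\LL)=\ldim(\LL)$. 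Hence $\dim(\LL)=\ldim(\LL)=\sldim(\LL)$ for Cremona reduced $\LL$, and by Cremona invariance for every effective six-point system. The ``only if'' half of Conjecture~\ref{conjecture} follows formally: if $\LL$ is special then $\sldim(\LL)=\dim(\LL)>\edim(\LL)\ge\vdim(\LL)$, so the cycles $\J(L_I,\sigma_t)$ other than $\PP^3$ and the fat points — the lines, the planes, the curve $C=\sigma_1$, and the surface cones $\J(p_i,C)$ — contribute a nonzero total to~\eqref{RNC expected dimension}, forcing $k_{I,\sigma_t}>r_{I,\sigma_t}$ for one of them; by Lemma~\ref{linear base locus} (for $t=0$) or Lemma~\ref{base locus lemma rnc} (for $t\ge1$) the corresponding cycle — a linear cycle, or a cone over a secant variety of $C$ — lies in the base locus of $\LL$.

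What needs care is the reduction, not the arithmetic: one must check that the Cremona algorithm terminates on effective systems and that, when a reduced system acquires a zero or negative multiplicity and hence lives on at most $n+2=5$ points, linear non-speciality is supplied by the $s\le n+2$ part of Theorem~\ref{theorem n+3} rather than by~\eqref{EffectivityCondition}. Conceptually the argument succeeds only because of the displayed inequality: an effective six-point system in $\PP^3$ may itself have $k_C\ge2$ (for example $\LL_{3,12}(7^6)$ has $k_C=6$), but after full Cremona reduction the rational normal curve leaves the base locus entirely — a phenomenon special to $n=3$ — so no secant correction term is ever active and the problem collapses to the linear theory of~\cite{BraDumPos1,DumPos}.
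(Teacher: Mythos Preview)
Your argument is correct and follows the same skeleton as the paper: reduce to the Cremona-reduced case via Lemma~\ref{cremona-invariant} and~\eqref{Cremona preserves dim}, show $k_C\le 0$ there so that $\sldim=\ldim$, and conclude $\ldim=\dim$. The one substantive difference is the source for that last equality: the paper cites \cite[Theorem~5.3]{devolder-laface}, which asserts directly that every Cremona-reduced system in $\PP^3$ is linearly non-special, whereas you remain internal to the paper by checking that Cremona-reduced six-point systems satisfy the numerical hypothesis~\eqref{EffectivityCondition} of Theorem~\ref{theorem n+3}. This buys self-containment and makes explicit the inequality $k_C\le 0$ (which the paper states without justification); the cost is the small case analysis you flag, where reduction produces a non-positive multiplicity and one must fall back to the $s\le n+2$ clause of Theorem~\ref{theorem n+3}.
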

\begin{proof}
Let $\LL$ be a linear system in $\PP^3$. 
If $\LL$ is Cremona reduced, then it is linearly non-special by \cite[Theorem 5.3]{devolder-laface},
that is $\dim(\LL)=\ldim(\LL)$.
On the other hand since $k_C\le0$ by Remark \ref{no multiple rnc}
we have $\sldim(\LL)=\ldim(\LL)$ and this concludes the proof in this case.

Assume now that $\LL$ is not Cremona reduced and denote by $\Cr(\ls)$ the corresponding Cremona reduced linear system.
We have 
 $\sldim(\LL)=\sldim(\LL')
=\dim(\LL')=\dim(\LL)$. The first equality follows from Lemma \ref{cremona-invariant}, the second follows from the previous case
and the last one from \eqref{Cremona preserves dim}.
\end{proof}

\subsubsection{Families of homogeneous linear systems that satisfy Conjecture \ref{conjecture}}\label{multiples secants} 

Consider the following family of linear systems 
$$\ls(t,a):=\ls_{2t,a(t+1)}((at)^{2t+3}),$$
for all $t,a\ge1$. 
Notice that in the case $a=1$, the linear system has one section that is $\sigma_t$, see Section \ref{section divisorial cones}.

\begin{proposition}\label{multiples of secant}
For any $t, a\ge1$, $\ls=\ls(t,a)$ has one element, $a\sigma_t$. In particular it satisfies Conjecture \ref{conjecture}.    
\end{proposition}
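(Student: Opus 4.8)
The plan is to derive both parts of the statement from the base locus lemma (Lemma~\ref{base locus lemma rnc}) together with the invariance of $\sldim$ under Cremona transformations (Lemma~\ref{cremona-invariant}). Write $n=2t$, $d=a(t+1)$ and $m_i=at$ for $i=1,\dots,2t+3$. By Section~\ref{section divisorial cones}, $\sigma_t$ is a hypersurface of degree $t+1$ with multiplicity $t$ along $C$, hence in particular at each of the $2t+3$ base points; so the divisor $a\sigma_t$ has degree $a(t+1)$ and multiplicity $at$ at every base point. Thus $a\sigma_t\in\ls(t,a)$ and $\ls(t,a)$ is non-empty.

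Next I would compute $k_C=\sum_{i=1}^{2t+3}m_i-nd=(2t+3)\,at-2t\cdot a(t+1)=at\ge1$. Since $n=2t$, Lemma~\ref{base locus lemma rnc} applies to the hypersurface $\sigma_t$ (the case $I=\emptyset$): it is a fixed component of $\ls(t,a)$ with exact multiplicity $k_{\sigma_t}=tk_C-(t-1)d=at^2-a(t-1)(t+1)=a$. Subtracting $a\sigma_t$ from an arbitrary member of $\ls(t,a)$ then yields an element of the residual system $\ls_{2t,\,a(t+1)-a(t+1)}\bigl((at-at)^{2t+3}\bigr)=\ls_{2t,0}$, which consists of a single element; hence $\ls(t,a)=\{a\sigma_t\}$, proving the first assertion. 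Moreover the base locus of $\ls(t,a)$ contains $\sigma_t$, a secant variety of $C$, so the qualitative part of Conjecture~\ref{conjecture} is automatic and it only remains to verify $\dim(\ls(t,a))=\sldim(\ls(t,a))$.

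For this I would exploit Cremona-invariance of $\sldim$. Applying the standard Cremona transformation based on the $2t+1$ base points of largest multiplicity (after reordering), one checks that the Cremona parameter equals $c=a\ge1$ at every stage and that, for $1\le k\le t$, after $k$ steps one arrives at the system
\[
\ls_{2t,\,a(t-k+1)}\bigl((a(t-k+1))^{2k},\,(a(t-k))^{2t+3-2k}\bigr);
\]
in particular after $t$ steps one reaches $\ls_{2t,a}\bigl(a^{2t},0^{3}\bigr)$. Along this chain $\sldim$ is unchanged by Lemma~\ref{cremona-invariant}, and so is the dimension by \eqref{Cremona preserves dim}. For the terminal system $k_C=2t\cdot a-2t\cdot a=0$, so Remark~\ref{no multiple rnc} gives $\sldim\bigl(\ls_{2t,a}(a^{2t},0^3)\bigr)=\ldim\bigl(\ls_{2t,a}(a^{2t},0^3)\bigr)$; since this system has at most $n+2$ base points of positive multiplicity and is non-empty, it is linearly non-special by Theorem~\ref{theorem n+3}, i.e.\ $\ldim=\dim$ for it. Chaining the equalities yields $\sldim(\ls(t,a))=\dim(\ls(t,a))$, which is Conjecture~\ref{conjecture} for this family.

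The only real work is the bookkeeping in the Cremona chain: verifying $c=a$ at each of the $t$ steps and tracking how the $2t+3$ multiplicities split into the two displayed blocks, which is elementary and is best done by induction on $k$. Alternatively one can verify $\sldim(\ls(t,a))=1$ directly: by homogeneity the secant linear virtual dimension collapses to $\sum_{\tau=0}^{t}\sum_{r=0}^{2t-2\tau}\binom{2t+3}{r}(-1)^{r}\binom{2t+a(t+1-r-\tau)-r-2\tau}{2t}$, and one shows this alternating sum telescopes to $1$; I expect the Cremona argument to be the cleaner of the two.
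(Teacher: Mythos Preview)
Your argument is correct and takes a somewhat different route from the paper's. For $\dim\ls(t,a)=1$ you argue directly: Lemma~\ref{base locus lemma rnc} forces $a\sigma_t$ as a fixed component, and the residual has degree~$0$. The paper instead proves both $\dim=1$ and $\sldim=1$ simultaneously by induction on $t$: one Cremona step gives $\Cr(\ls(t,a))=\ls_{2t,at}\bigl((a(t-1))^{2t+1},(at)^2\bigr)$, and then cone reduction (Lemma~\ref{cone-invariant}) on the two points of multiplicity $at=d'$ drops the ambient dimension to $2(t-1)$ and yields exactly $\ls(t-1,a)$. Your approach stays in $\PP^{2t}$ throughout and iterates Cremona $t$ times down to $\ls_{2t,a}(a^{2t},0^3)$, then invokes Theorem~\ref{theorem n+3}; this avoids Lemma~\ref{cone-invariant} entirely at the cost of the bookkeeping you flag (verifying $c=a$ at each step and the two-block multiplicity pattern), while the paper's inductive step is shorter but needs both invariance lemmas. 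One small point worth making explicit in your last step: the three zero-multiplicity points contribute nothing to $\ldim$ (any $I(r)$ containing one of them has $k_{I(r)}\le0$ since $d=a$), so $\ldim$ of the terminal system with $n+3$ points agrees with $\ldim$ of $\ls_{2t,a}(a^{2t})$, which is the system Theorem~\ref{theorem n+3} actually addresses.
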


\begin{proof}
The hypersurface $a\sigma_t$ belongs to $\ls(t,a)$ because it has degree $a(t+1)$ and multiplicity $at$ 
along the rational normal curve given by the $2t+3$ points, see discussion in Section \ref{secants}.

We prove by induction on $t$ that  $a\sigma_t$ is the unique element of $\ls(t,a)$ and that $\sldim(\LL)=1$.
If $t=1$, then the system $\ls=\LL_{2,2a}(a^5)$ has one section that consists of the multiple conic $a\sigma_1\subset\PP^2$.
Furthermore it is easy to compute that 
$\sldim(\LL)=1.$
 
Now assume that $t\ge 2$.
First,  by means of a Cremona transformation
we reduce to $\Cr(\ls(a,t))=\ls_{2t,at}((a(t-1))^{2t+1},(at)^2)$ and
{$\sldim(\ls(a,t))=\sldim(\Cr(\ls(a,t)))$},
by Lemma \ref{cremona-invariant}. 
Second,  we observe that $\textrm{Cone}(\Cr(\ls(a,t)))=\ls(t-1,a)$.
We conclude by induction and by Lemma \ref{cone-invariant}.
\end{proof}

\medskip
For every $b\ge1$, 
let us consider the following linear system
$$\ls(b):=\ls_{n,b(n+2)}((bn)^{n+3}).$$

\begin{proposition}\label{m=n}
Let $n\ge2$ and $b\ge1$. The linear system $\ls(b)$ has one element if $n$ is even and empty otherwise.
In particular $\ls(b)$ satisfies Conjecture \ref{conjecture}.
\end{proposition}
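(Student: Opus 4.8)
The plan is to mimic the inductive strategy used in the proof of Proposition \ref{multiples of secant}, combining Cremona transformations, cone reductions, and the base locus lemma. First I would observe that $\ls(b)=\ls_{n,b(n+2)}((bn)^{n+3})$ has $k_C=(n+3)(bn)-n\cdot b(n+2)=bn$, which is large, so by Lemma \ref{base locus lemma rnc} the rational normal curve $C$ — and all the cones $\J(L_I,\sigma_t)$ with $k_{I,\sigma_t}\geq 1$ — sit in the base locus with the prescribed multiplicities. The key arithmetic point is that $\ls(b)$ is a \emph{homogeneous} system whose degree and multiplicity are tuned precisely so that, after peeling off the appropriate fixed cone, the residual system is again of the same shape in one lower dimension (or in the same dimension with smaller coefficients).

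The main step is the reduction. Writing $n=2l+\epsilon$ with $\epsilon\in\{0,1\}$, I would distinguish the even and odd cases. When $n$ is even, I expect that applying a Cremona transformation based at $n+1$ of the points, followed by a cone reduction (in the sense of Lemma \ref{cone-invariant}), sends $\ls(b)$ to a system of the form $\ls(t',a')$ in dimension $n-2$ of the family studied in Proposition \ref{multiples of secant}, or directly to $\ls(b')$ in dimension $n-2$; iterating brings us down to $\PP^2$, where $\ls_{2,2b'}((b')^5)$ (a multiple conic) has exactly one section. Since both cone reduction and Cremona transformations preserve $\dim$ (by \eqref{Cremona preserves dim} and the definition of cone reduction) and preserve $\sldim$ (by Lemmas \ref{cone-invariant} and \ref{cremona-invariant}), establishing $\dim(\ls(b))=1=\sldim(\ls(b))$ reduces to the base case. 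When $n$ is odd, the analogous reduction should terminate at a system on $\PP^1$ or at a system with $k_C$ forcing so much vanishing along $C$ that the virtual/expected dimension is negative; concretely I would check that the $\sldim$ formula in Definition \ref{new definition rnc} evaluates to $0$ and that $\dim(\ls(b))=0$, i.e. the system is empty. One should verify directly that along the reduction chain the coefficients stay in the effective range $m_i\le d$ and that the Cremona constant $c$ is positive at each stage, so that the reduction is legitimate; this is where the specific numerology $d=b(n+2)$, $m=bn$ is used.

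Finally, to conclude that Conjecture \ref{conjecture} holds for $\ls(b)$, it suffices to note that in the even case the unique element is visibly a sum of the cones $\J(L_I,\sigma_t)$ produced by the base locus lemma (so the speciality is entirely explained by secant-type obstructions, and $\dim=\sldim$ by the invariance lemmas), while in the odd case the system is empty so there is nothing to check.

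\textbf{Expected main obstacle.} The delicate point is bookkeeping the exact sequence of Cremona transformations and cone reductions so that the residual system lands \emph{on the nose} inside a family already handled (either $\ls(t,a)$ of Proposition \ref{multiples of secant} or $\ls(b')$ in lower dimension), rather than merely in some system of comparable size; in particular one must track how the reduction interacts with the parity of $n$ and confirm that the odd case genuinely collapses to an empty (negative virtual dimension) system. Verifying that $\sldim$ is computed correctly — i.e. that no cone $\J(L_I,\sigma_t)$ contributes a spurious nonzero binomial after reduction — is the part most likely to require careful case analysis, though it is routine given Remark \ref{no multiple rnc} and the invariance lemmas.
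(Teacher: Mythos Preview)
Your approach is essentially the paper's: Cremona at $n+1$ points, then cone-reduce, then induct on $n$ stepping down by two, using Lemmas \ref{cone-invariant} and \ref{cremona-invariant} to carry $\sldim$ along. The ``expected main obstacle'' evaporates once you actually compute: the Cremona constant is $c=(n+1)bn-(n-1)b(n+2)=2b$, so $\Cr(\ls(b))=\ls_{n,bn}\bigl((b(n-2))^{n+1},(bn)^2\bigr)$, and two cone reductions at the two points of multiplicity $bn=d$ yield precisely $\ls_{n-2,bn}\bigl((b(n-2))^{n+1}\bigr)$, which is $\ls(b)$ in $\PP^{n-2}$ with the \emph{same} parameter $b$; the base cases are $n=1$ (where $\ls_{1,3b}(b^4)$ is empty) and $n=2$ (the $b$-fold conic).
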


\begin{proof}
The proof is by induction on $n\ge1$.
If $n=1$, one has $\dim(\ls_{1,3b}(b^4))=0$.
If $n=2$, one has $\dim(\ls_{2,4b}((2b)^5))=1$. 

Now assume $n\ge3$. Notice that $\textrm{Cone}(\Cr(\ls(b)))=\ls_{n-2,bn}((b(n-2))^{n+1})$.
We conclude by induction on $n$, using Lemma \ref{cone-invariant} and Lemma \ref{cremona-invariant}.
\end{proof}

\medskip
Consider the family
$$\LL=\LL_{n,d}(n^{n+3}).$$

\begin{proposition}\label{m=n conjecture true}
The linear system $\ls$ satisfies Conjecture \ref{conjecture} for any $n\ge 2$ and $d\ge1$.
\end{proposition}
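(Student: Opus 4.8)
The plan is to prove this by induction on $n$, reducing the linear system $\LL_{n,d}(n^{n+3})$ to systems with fewer points or lower dimension via the two invariance lemmas already established, namely Lemma \ref{cone-invariant} and Lemma \ref{cremona-invariant}. First I would observe that $\LL_{n,d}(n^{n+3})$ is homogeneous with all multiplicities equal to $n$, so computing $k_C=\sum_{i=1}^{n+3}m_i-nd=(n+3)n-nd=n(n+3-d)$. When $d\ge n+3$ we have $k_C\le 0$, so by Remark \ref{no multiple rnc} the rational normal curve sits at most simply in the base locus, $\sldim(\LL)=\ldim(\LL)$, and the conjecture follows from the linear theory: one should check via the effectivity condition \eqref{EffectivityCondition} and Theorem \ref{theorem n+3} that such systems are linearly non-special. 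So the substantive range is $1\le d\le n+2$, where $k_C\ge n\ge 2$ and non-linear obstructions genuinely appear.

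For the remaining range $1\le d\le n+2$, I would split according to whether $\LL$ is Cremona reduced. If it is not, then $c=\sum_{i=1}^{n+1}m_i-(n-1)d=(n+1)n-(n-1)d\ge 1$, and applying the Cremona transformation gives $\Cr(\LL)=\LL_{n,d-c}(n-c,\dots,n-c,n,n)$; by Lemma \ref{cremona-invariant} we have $\sldim(\LL)=\sldim(\Cr(\LL))$ and by \eqref{Cremona preserves dim} $\dim(\LL)=\dim(\Cr(\LL))$, so it suffices to handle the Cremona-reduced case. Iterating, one reduces either to $c\le 0$ (Cremona reduced) or to a system where some multiplicity has dropped to $d$, at which point Lemma \ref{cone-invariant} (cone reduction) drops the ambient dimension by one and we invoke the inductive hypothesis; the base cases $n=2,3$ are already established in Section \ref{low dimension conj true}. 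One should also note that the two special families $\LL(t,a)=\LL_{2t,a(t+1)}((at)^{2t+3})$ and $\LL(b)=\LL_{n,b(n+2)}((bn)^{n+3})$ from Propositions \ref{multiples of secant} and \ref{m=n} cover the genuinely ``saturated'' cases $d=t+1$ with $m=t$ (when $n=2t$) and $d=n+2$ with $m=n$, so those serve as the anchor points of the reduction tree.

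The key steps, in order, are: (1) dispose of $d\ge n+3$ via $k_C\le 0$ and Theorem \ref{theorem n+3}; (2) for $1\le d\le n+2$, reduce to Cremona-reduced systems using Lemma \ref{cremona-invariant} and \eqref{Cremona preserves dim}; (3) inside the Cremona-reduced regime, after reordering $m_1\ge\cdots\ge m_{n+3}$, use cone reduction (Lemma \ref{cone-invariant}) whenever the largest multiplicity equals the degree, descending to $\PP^{n-1}$; (4) verify that this descent terminates at a system covered by $n\le 3$ or by the homogeneous families in Propositions \ref{multiples of secant} and \ref{m=n}; (5) throughout, track that at each reduction step the actual dimension $\dim(\LL)$ is preserved or matched, so that the chain of equalities $\sldim(\LL)=\cdots=\dim(\LL)$ closes. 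The main obstacle I expect is step (4): after a Cremona transformation the new multiplicity vector $(n-c,\dots,n-c,n,n)$ is no longer homogeneous, so one must argue carefully that the reduction process on this inhomogeneous ``boundary'' family indeed terminates in finitely many steps at one of the known cases — essentially a bookkeeping argument showing that $d$ strictly decreases under each non-trivial Cremona step and that cone reductions strictly decrease $n$, so no infinite descent occurs, and the terminal systems are precisely those already handled. A secondary subtlety is confirming that $\dim(\LL_{n,d}(n^{n+3}))$ itself (not just $\sldim$) matches at the base cases, which for the saturated families is exactly the content of Propositions \ref{multiples of secant} and \ref{m=n}.
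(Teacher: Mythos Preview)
Your treatment of the case $d\ge n+3$ is exactly the paper's: $k_C\le 0$, hence $\sldim=\ldim$ by Remark \ref{no multiple rnc}, and linear non-speciality follows from Theorem \ref{theorem n+3}.

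For $1\le d\le n+2$, however, you are working much harder than necessary and the scheme does not quite close. The paper's argument is a two-line case split: if $d\le n+1$ then already $\sum_{i=1}^{n+2}m_i=(n+2)n>nd$, so by Theorem \ref{old effectivity} the system $\LL_{n,d}(n^{n+2})$ is empty, hence so is $\LL$, and there is nothing to prove; if $d=n+2$ then $\LL=\LL_{n,n+2}(n^{n+3})$ is precisely $\LL(b)$ with $b=1$, and Proposition \ref{m=n} applies directly. No induction, no Cremona, no cone reduction is needed.

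Your proposed reduction runs into the obstacle you yourself flag in step (4): after a single Cremona step the system becomes $\LL_{n,d-c}((n-c)^{n+1},n^2)$, which is no longer of the form $\LL_{n,d'}(n^{n+3})$, so an induction on $n$ within this family cannot be invoked. You would need to formulate and prove a stronger inductive statement covering these inhomogeneous boundary systems, which is considerably more work and is not carried out in the proposal. The paper sidesteps this entirely by recognising that the range $d\le n+1$ is vacuous (empty system) and $d=n+2$ is already done.
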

\begin{proof}
If $d\le n+1$, then  the system is empty by Theorem \ref{old effectivity}.
If $d=n+2$ we conclude by applying  Proposition \ref{m=n} in the case $b=1$.
If $d\ge n+3$, the statement follows from Theorem \ref{theorem n+3} and Remark \ref{no multiple rnc}.
\end{proof}

\end{document}